\documentclass[12pt]{amsart}

\usepackage[margin=1.15in]{geometry}

\usepackage[T2A]{fontenc}
\usepackage[utf8]{inputenc}
\usepackage[english]{babel}

\usepackage{amsmath,amscd,amssymb,amsfonts,latexsym}
\usepackage{wasysym}
\usepackage{mathrsfs}
\usepackage{mathtools,hhline}
\usepackage{color}
\usepackage{bm}
\usepackage[all, cmtip]{xy}

\usepackage{url}
\usepackage{mathtools}
\usepackage{amsmath}

\definecolor{hot}{RGB}{65,105,225}

\usepackage[pagebackref=true,colorlinks=true, linkcolor=hot ,  citecolor=hot, urlcolor=hot]{hyperref}%make all the references and links clickable

\theoremstyle{plain}
\newtheorem{theorem}{Theorem}[section]

\newtheorem{corollary}[theorem]{Corollary}

\newtheorem{lemma}[theorem]{Lemma}

\theoremstyle{definition}
\newtheorem{definition}[theorem]{\sc Definition}
\newtheorem{example}[theorem]{\sc Example}
\newtheorem{remark}[theorem]{\sc Remark}

\numberwithin{equation}{section}

\newcommand\hot{\mathrm{h.o.t.}}

\newcommand\cB{\mathcal{B}}
\newcommand\cE{\mathcal{E}}

\newcommand\bx{\mathbf{x}}
\newcommand\ity{\infty}

\def\bR{\mathbb{R}}
\def\bC{\mathbb{C}}
\def\bP{\mathbb{P}}

\def\m{\setminus}

\newcommand{\NCone}{\mathscr{N}\mathrm{Cone}}

\DeclareMathOperator{\Sing}{Sing}                                

\DeclareMathOperator{\mult}{mult}
\DeclareMathOperator{\ord}{ord} 
 
  \DeclareMathOperator{\gen}{gen}              
\DeclareMathOperator{\reg}{reg} 
                
\DeclareMathOperator{\sing}{sing}
\DeclareMathOperator{\atyp}{atyp}
\DeclareMathOperator{\iflex}{iflex}
\DeclareMathOperator{\focal}{focal}
  \DeclareMathOperator{\Cone}{Cone}

\DeclareMathOperator{\EDdeg}{EDdeg}
\DeclareMathOperator{\ED}{ED}
\DeclareMathOperator{\tED}{\ED*}
\DeclareMathOperator{\Eu}{Eu} 
\DeclareMathOperator{\cl}{closure}

\title[ED discriminants]{Euclidean distance discriminants and Morse attractors}

\author{Cezar Joi\c ta}
\address{Institute of Mathematics of the Romanian Academy, P.O. Box 1-764,
 014700 Bucharest, Romania and Laboratoire Europ\' een Associ\'e  CNRS Franco-Roumain Math-Mode}
\email{Cezar.Joita@imar.ro}
\author{Dirk Siersma}
\address{Institute of Mathematics, Utrecht University, PO
Box 80010, \ 3508 TA Utrecht, The Netherlands.}
\email{D.Siersma@uu.nl}

\author{Mihai Tib\u ar}
\address{Univ. Lille, CNRS, UMR 8524 -- Laboratoire Paul Painlev\'e, F-59000 Lille,
France}  
\email{mihai-marius.tibar@univ-lille.fr}

\thanks{The authors acknowledge support from the project ``Singularities and Applications'' - CF 132/31.07.2023 funded by the European Union - NextGenerationEU - through Romania's National Recovery and Resilience Plan, and support by the grant CNRS-INSMI-IEA-329. }

\keywords{enumerative geometry, ED discriminant, number of Morse points, Euclidean distance degree}
\subjclass[2010]{14N10, 14H50, 51M15, 58K05}

% 14H50(1980-now)Plane and space curves
% 51M15(1980��-now)Geometric constructions in real or complex geometry
% 14N10   Enumerative problems (combinatorial problems) in algebraic geometry
% 14C17  Intersection theory, characteristic classes, intersection multiplicities in algebraic geometry [See also 13H15]
% 32S30  Deformations of complex singularities; vanishing cycles [See also 14B07]
% 55R55: "Fiberings with singularities in algebraic topology" (MSC2020)
% 58K05: "Critical points of functions and mappings on manifolds" (MSC2020)

\begin{document}

%\date{\today}

\begin{abstract}  
Our study concerns the Euclidean distance function in case of  complex plane curves. We decompose the ED discriminant into components which are responsible for three types of behavior of the Morse points.
Besides the traditional focal component, which is non--linear; the other components are lines. In particular we shed light on the ``atypical discriminant'' which is due to the loss of Morse critical  points at isotropic points at infinity. This phenomena is specific  for the complex  setting.

We find formulas for the number of Morse singularities which abut to the corresponding  type of  attractors when moving the centre of the  distance function toward a  point of the discriminant.

\end{abstract}

\maketitle

%%%%%%%%%%%%%%%

%%%%%%%%%%%%%

\section{Introduction}
 
 Early studies dedicated to the Euclidean distance emerged before 2000, with much older roots going back to the 19th century geometers.  For instance, if one considers the particular case of a curve  $X \subset \bR^2$ given by a real equation $f(x,y) = 0$,
the aim is to study the critical points of the Euclidean distance function:
\[D_u(x,y) = (x - u_{1})^{2} + (y - u_{2})^{2} \]
 from a  centre $u :=(u_1,u_2)$ to the variety $X$. In the case that $X$ is compact and smooth, $D_{u}$ is generically a Morse function, and the values  $u$ where $D_{u}$ has degenerate critical points are called \emph{discriminant}, or \emph{caustic}, or \emph{evolute}.
 These objects have been studied intensively in the past, see e.g. the recent study \cite{PRS} with its multiple references including to Huygens in the 17th century,  and to the ancient Greek geometer Apollonius.
 
On each connected component of the complement of the caustic, the number of Morse critical points of $D_u$ and their index is constant. 
Assuming now that $(x,y)$ are complex coordinates, the number of those complex critical points is known as the \emph{ED degree}, and it provides upper bounds for the real setting. The corresponding discriminant is called the \emph{ED discriminant}. These notions have been introduced in \cite{DHOST}, and have been studied in many papers ever since, see e.g.  \cite{Hor2017}, \cite{DGS}, \cite{Ho}. They  have applications to computer vision e.g. \cite{PST2017}, numerical algebraic geometry, data science, and other optimization problems e.g.  \cite{HS2014}, \cite{NRS2010}.

The earlier paper \cite{CT} contains a study of the ED discriminant under a different name, with a particular definition and within a restricted class of (projective) varieties.  

 From the topological side, more involved computation of $\EDdeg(X)$  have been done in \cite{MRW2018}, \cite{MRW5} etc,  in terms of the Morse formula from \cite{STV} for the \emph{global Euler obstruction}  $\Eu(X)$, and  in terms of vanishing cycles of a linear Morsification of a distance function where the data point is on the ED discriminant.
In particular the same authors have proved  in \cite{MRW2018} the \emph{multiview  conjecture} which had been stated in \cite{DHOST}. 
 
This type of study based on Morsifications appears to be extendable  to singular polynomial functions, see \cite{MT1}, \cite{MT2}. The most recent paper \cite{MT3} treats for the first time the case of Morse points disappearing at infinity, via a new principle of computation based on relative polar curves.

In this paper we consider the discriminant in the case of complex plane curves $X$, where several general striking phenomena already manifest.
In particular,  the "loss of Morse points at infinity`` has a central place in our study. This phenomenon shows that the bifurcation locus encoded by the discriminant may be partly due to the non-properness of the projection $\pi_2: \cE_X \to \bC^n$, see Definition \ref{d:incidence}. It occurs even in simple examples, and it is specific to the complex setting.\\ 

We will show that the discriminant can be decomposed in the following way:
  $$ \Delta_{\tED} = \Delta^{\focal} \cup \Delta^{\iflex}  \cup \Delta^{\atyp} \cup \Delta^{\sing}  $$
  
  $(1).$    \emph{The focal set} $\Delta^{\focal}$ represents the traditional evolute and is birational with the regular part of $X$. It is generically curved (may contain components of higher degree); while the other components are lines and related to certain isolated points on $X$ (or its completion  at infinity).

 The line components, that will be discussed are:
 
 $(2).$  \emph{The atypical discriminant} $\Delta^{\atyp}$, due to the Morse points which are ``lost'' at infinity. A necessary condition is that  $\overline{X}$ passes through an isotropic point. See \S\ref{s:atyp}.

 $(3).$  \emph{The singular  discriminant} $\Delta^{\sing}$, due to the Morse points which move to singularities of $X$. See \S\ref{s:sing}.
 
  $(4)$ \emph{The iflex discriminant } $\Delta^{\iflex}$, due to collision of Morse points on $X_{\reg}$, which move to flex points on $X$, with isotropic tangent lines (for short:\emph{ iflex points}). See \S\ref{s:reg}. 
  
   \emph{The regular discriminant} $\Delta^{\reg}$ is the union $\Delta^{\focal} \cup \Delta^{\iflex}$ and is just the discriminant set on $X_{\reg}$.
   
   These discriminants may intersect,  and may also have common components, which should then be lines.
   Several examples at the end will illustrate these notions and other phenomena, see \S\ref{s:examples}.

 \noindent The contents of our study are as follows.

In \S\ref{s:discrim} we recall two definitions of ED discriminants that one usually uses, the total ED discriminant $\Delta_{\tED}(X)$, and the strict ED discriminant $\Delta_{\ED}(X)$. We explain the first step of a classification for low ED degree,  equal to 0 and to 1. In \S\ref{ss:attract} we discuss the concept of attractor, as limit of Morse points, along a curves in $u$-space. 
We discuss the different types of discriminants in  \S\ref{s:atyp}, \S\ref{s:sing} and \S\ref{s:reg}. For each type of complex discriminant, we compute in \S\ref{ss:morseinfty}, \S\ref{s:sing} and \S\ref{ss:morsereg}, the number of Morse singularities which abut to attractors of Morse points, respectively.

We introduce also the concept of \emph{focal point} on $\Delta_p^{\sing}$ and $\Delta_{\xi}^{\atyp}$ and show  that these points are limits of $\Delta^{\focal}$  on each of them. Note that focal points on  $\Delta_p^{\sing}$ can occur on every point of $\Delta_p^{\sing}$, including $p$.
Several quite simple examples at \S\ref{s:examples} illustrate all these results and phenomena,  with detailed computations.

Let us remark   some  differences with the real setting. The two discriminants $\Delta^{\atyp}$ and $\Delta^{\iflex}$ don't occur; so no lines occur because of them. Therefore no attractors at infinity. The discriminant $\Delta^{\sing}$ can occur in the real setting.

\medskip
 The authors thank the hospitality of IMAR (the Simion Stoilow Institute of Mathematics) in Bucharest, where part an important part of the work has been carried out.
 
We hereby share the sad news that Mihai Tib\u ar, our colleague and author of this article, passed away in September 2025. He was actively involved in the writing and revision of this article.

%\tableofcontents
%%%%%%%%%%%%%
\section{ED degree and ED discriminant}\label{s:discrim}

\subsection{Two definitions of the ED discriminant}

We consider  an algebraic curve $X\subset \bC^{2}$, with reduced structure. Its singular set $\Sing X$ consists of a finite subset of points.

\begin{definition}\label{d:defgood}
The \emph{ED degree of $X$}, denoted by $\EDdeg(X)$, is the number of Morse (critical)  points $p\in X_{\reg}$ of a generic distance function $D_{u}$, and this number is independent of the choice of the generic centre $u$ in a Zariski-open subset of $\bC^{2}$.
\end{definition}
\begin{definition}\label{d:total-dcr}
The \emph{total ED discriminant}  $\Delta_{\tED}(X)$ is  the set of points $u \in \bC^{2}$ such that  the function
$D_{u}$ has less than  $\EDdeg(X)$ Morse  points on $X_{\reg}$, or that $D_{u}$ is not a Morse function on $X_{\reg}$.\footnote{In particular $u\in\Delta_{\tED}(X)$ if $D_{u}$ has non-isolated singularities.}
   \end{definition}
 
Note that by definition $\Delta_{\tED}(X)$ is a closed set, as the complement of an open set. Indeed, if $D_{u}$ a Morse function
 then $D_{u'}$ is a Morse function for any $u'$ in some neighbourhood of $u$.
 
 \
 
 A different definition was given in \cite{DHOST}, as follows.
Consider the following incidence variety, a variant of the conormal of $X$,  where $\bx = (x,y)$ and $(u-\bx)$ is viewed  as a 1-form:
$$  \cE_X := \cl \bigl\{ (\bx,u)\in X_{\reg}\times \bC^{2} \mid   \ (u-\bx)|T_{\bx}X_{\reg}=0  \bigr\}  \subset X\times \bC^{2}  \subset \bC^{2}\times \bC^{2},$$
and let us remark that  $\dim \cE_X = 2$.  $\cE_X $ is just the closure of the set of critcal points of $D_u$. Let $\pi_{1} : \cE_X  \to X$ and $\pi_{2} : \cE_X  \to \bC^{2}$ be the  projections on the first and second factor, respectively.
The projection $\pi_{2}$ is generically finite, and the degree of this finite map is the \emph{ED degree of $X$}, like also defined above at Definition \ref{d:defgood}.

\begin{definition}\label{d:incidence}
The branch locus of $\pi_{2}$ is called  \emph{the (strict) ED discriminant}, and will be denoted here by $\Delta_{\ED}(X)$.
The branch locus is defined as the set of $u \in \bC^2$ where $\pi_2^{-1}(u)$ has cardinality different from $\EDdeg(X)$ (points are counted without multiplicities).
\end{definition}

Note that $\Delta_{\tED}(X)$ is related to the bifurcation of the function, while $\Delta_{\ED}(X)$ is related to the critical points.
By the above  definitions, we have the inclusion $\Delta_{\ED}(X)\subset \Delta_{\tED}(X)$, which may not be an equality, see e.g. Examples \ref{ss:lines} and \ref{ss:cusp}. In fact: 
 $\Delta_{\tED}(X) =\Delta_{\ED}(X) \cup \Delta^{\sing}$.

\subsection{Terminology and two simple examples}\label{e:2ex}\   
We say that a line in $\bC^2$ is \emph{isotropic} if its projective closure passes though one of the (isotropic) points $[1; \pm i]$ on the projective line $H^{\infty}$ at infinity. Notation $Q^{\infty} =  [1;i] \cup [i;1]$. We say that a line $K$
 is \emph{normal} to a line $L$ at some point $p\in L$ if $\langle q-p, r-p \rangle$ is equal to 0 for any $q\in K$ and any $r\in L$ and where $(q-p)$ is viewed  as a 1-form. Note that an isotropic line is normal to itself.

%%%%%%%%%%%%%%%
 
\begin{example}[Lines] \label{ss:lines}\
 Lines in $\bC^{2}$ do not have all the same ED degree, see Theorem \ref{t:lines}(a-b). Let $X$ be the union of two non-isotropic lines intersecting at a point $p$.  By additivity the ED degree is then $\EDdeg(X) =2$. According to the definitions,  the ED discriminant   $\Delta_{\tED}(X)$ contains the two normal lines at $p$, whereas $\Delta_{\ED}(X)$ is empty.
\end{example}

%%%%%%%%%%%%%%
\begin{example}[Cusp]\label{ss:cusp}\ 
The plane cusp $X:= \{ (x,y) \in \bC^2 \mid x^{3}=y^{2}\}$ has  $\EDdeg(X)= 4$.  The ED discriminant $\Delta_{\ED}(X)$ is a smooth curve of degree 4  passing through the origin.
If $u\in \Delta_{\ED}(X)$ is a point different from the origin, then the distance function $D_{u}$ has precisely one non-Morse critical point on $X_{\reg}$ produced by the merging of two of the Morse points.

The origin is a special point of $\Delta_{\ED}(X)$: the distance function from the origin, denoted by $D_{0}$, has only two Morse points on $X_{\reg}$ while two other Morse points had merged in the origin.

We have $\Delta_{\tED}(X) = \Delta_{\ED}(X)\cup  L$, where       $L =\{x=0\}$, the line normal to the tangent to the cusp. At some point $u\in L$ different from the origin, 
the distance function $D_{u}$ has only 3 Morse points on $X_{\reg}$ while the 4th Morse point had merged with  the singular point of $X$. 

\end{example}

%%%%%%%%%%%%%%%%%%%%%%%%%
\subsection{First step of a classification}\label{ss:classif}

\begin{theorem}\label{t:lines}
 Let  $X\subset \bC^{2}$ be an irreducible reduced curve. Then
 \begin{enumerate}
 \item $\EDdeg(X) =0$ $\Longleftrightarrow$ $X$ is an isotropic line.
  In this case $\Delta_{\tED}(X)=X$.

 \item $\EDdeg(X) =1$ $\Longleftrightarrow$ $X$ is a line different from an isotropic line. In this case $\Delta_{\ED}(X)$ is empty.
 
 \item  The discriminant $\Delta_{\ED}(X)$ contains some point $u= (u_1, u_2) \in \bC^2$ such that $\dim \pi_{2}^{-1}(u)>0$ if and only if:
 
 (i).   either $X = \{ (x, y)\in \bC^2 \mid (x-u_{1})^{2}+ (y-u_{2})^{2} = \alpha\}$ for a certain $\alpha  \in \bC^{*}$. 
 
 (ii).  or   $X$ is an isotropic line. 
   \end{enumerate}  
\end{theorem}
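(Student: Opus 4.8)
The plan is to reduce all three statements to a single computation concerning the restriction $D_u|_X$ and its level sets. First I would record the critical-point equation. Writing $\langle(a,b),(c,d)\rangle=ac+bd$ for the bilinear form, a point $p\in X_{\reg}$ is critical for $D_u$ exactly when $u-p$ annihilates $T_pX$, i.e. when $u$ lies on the \emph{normal line} $N_p:=p+(T_pX)^{\perp}$. Hence $\pi_2(\cE_X)=\bigcup_{p\in X_{\reg}}N_p$, and $\EDdeg(X)$ is the number of normal lines through a generic $u$. The technical core, which I would establish first and reuse throughout, is the equivalence: $\dim\pi_2^{-1}(u)>0$ if and only if $D_u|_X$ is constant. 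Indeed, $\pi_2^{-1}(u)$ is essentially the critical locus of $D_u$ on $X_{\reg}$; since on each fibre $\pi_1^{-1}(p)$ the projection $\pi_2$ is injective, any curve inside $\pi_2^{-1}(u)$ must dominate $X$ under $\pi_1$, so $D_u$ is critical at a generic point of the irreducible curve $X$, forcing $dD_u|_X\equiv 0$; the converse is clear.

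This settles (c). Constancy $D_u|_X\equiv\alpha$ means $X\subset\{D_u=\alpha\}$, and I would classify this level set: for $\alpha\neq 0$ it is a smooth conic, and $X$ being irreducible of dimension $1$ forces $X$ to equal that conic, which is case (i) with the same $u$ as its centre; for $\alpha=0$ it splits into the two isotropic lines through $u$, forcing $X$ to be one of them, which is case (ii). Conversely each of (i),(ii) produces such a $u$ (the centre, respectively any point of the isotropic line, using that an isotropic line is normal to itself), and that $u$ lies in $\Delta_{\ED}(X)$ because the generic fibre of $\pi_2$ is finite whereas this one is not.

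For (a) I would note that $\EDdeg(X)=0$ is equivalent to $\pi_2$ being non-dominant: since $\dim\cE_X=2=\dim\bC^2$ and $\cE_X$ is irreducible, a dominant $\pi_2$ would have finite nonempty generic fibres and hence positive degree. So $\EDdeg(X)=0$ forces the generic fibre over the (then one-dimensional) image to be positive-dimensional, whence $D_u|_X$ is constant for infinitely many $u$; by the core equivalence $X$ is a conic or an isotropic line. A conic is excluded because its normal lines all pass through the centre and sweep out the whole plane, so $\pi_2$ is dominant; thus $X$ is an isotropic line. The converse is the direct check that $D_u$ restricted to an isotropic line is affine in the line parameter, with vanishing derivative exactly when $u$ lies on the line itself; hence generically there is no critical point, $\EDdeg(X)=0$, while for $u\in X$ the function $D_u$ is constant and non-Morse, giving $\Delta_{\tED}(X)=X$.

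Finally, (b). The backward implication is immediate: for a non-isotropic line the normal direction is a fixed non-null vector, so the $N_p$ are distinct parallel lines partitioning $\bC^2$, giving exactly one critical point for every $u$, whence $\EDdeg(X)=1$ and $\Delta_{\ED}(X)=\emptyset$. For the forward implication I would analyse the family $\{N_p\}$ through its image $\Gamma$ in the dual plane under $p\mapsto[N_p]$. If $\Gamma$ is a point, all normals coincide and $X$ is the line they span, which is self-normal hence isotropic, so $\EDdeg(X)=0$; if $\Gamma$ is a line the family is a pencil through a point $c$, and either $c$ is finite, forcing $X$ to be a conic with $\EDdeg(X)=2$ by the core equivalence applied to $D_c$, or $c$ lies at infinity, so the normals are parallel and $X$ is a line with non-null, hence non-isotropic, direction; if $\Gamma$ is nonlinear the lines $N_p$ envelope a nondegenerate curve and their pairwise intersections fill a two-dimensional set, so a generic $u$ lies on at least two normals and $\EDdeg(X)\ge 2$. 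Only the parallel case yields $\EDdeg(X)=1$, so $X$ is a non-isotropic line. The step I expect to be the main obstacle is exactly this last dichotomy: one must show that a one-parameter family of normal lines that is not a pencil has pairwise intersections filling the plane, so that a generic centre sees at least two Morse points. This is the only point where a genuine, if classical, argument about families of lines is required rather than a formal dimension count, and it is precisely where the curved \emph{focal} behaviour separates from the degenerate pencil and parallel configurations.
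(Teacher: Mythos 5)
Your proposal is correct, and it reaches the theorem by a recognizably different organization than the paper's proof. You promote to a central lemma the equivalence $\dim\pi_2^{-1}(u)>0\iff D_u|_X\equiv\mathrm{const}\iff X$ is a component of a level set of $D_u$, and then derive (c) directly and (a) via non-dominance of $\pi_2$ (excluding the conic because its normals sweep out the plane); the paper instead proves (a) by the pointwise observation that $\EDdeg(X)=0$ forces every tangent line of $X_{\reg}$ to be its own normal, hence of isotropic direction, and reserves the level-set argument for (c) alone. For (b) the paper argues that $\EDdeg(X)=1$ forces the normal lines to be pairwise disjoint, hence parallel, hence $X$ a line; your trichotomy on the dual image $\Gamma$ of $p\mapsto[N_p]$ (constant, a pencil, or a curve of degree $\ge 2$) reaches the same conclusion while making explicit the concurrent case, which your core lemma identifies with the conic $\{D_c=\alpha\}$ of ED degree $2$, a configuration the paper's ``disjoint, therefore parallel'' step passes over silently. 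What your route buys is uniformity --- everything reduces to one statement about positive-dimensional fibres --- plus a clean separation of the degenerate configurations of normals; its cost is the one non-formal input you correctly single out, namely that when $\Gamma$ has degree at least $2$ a generic centre lies on at least two distinct normals. That is classical and true, and the quickest way to nail it down is the dual formulation rather than the ``intersections fill a two-dimensional set'' phrasing: a generic $u\in\bC^2$ corresponds to a generic line $u^{*}$ in the dual plane, which meets the irreducible curve $\Gamma$ in $\deg\Gamma\ge 2$ distinct points lying in the image of $X_{\reg}$, giving two distinct normal lines through $u$ and hence two distinct (and, for generic $u$, Morse) critical points of $D_u$.
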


\begin{proof}
In (a) and (b) the implications ``$\Leftarrow$'' are both clear by straightforward computation;  we will therefore show ``$\Rightarrow$'' only.  

\noindent (a). 
 $\EDdeg(X) =0$ implies that the normal to the tangent space $T_{p}X_{\reg}$ is this space itself. If $T_{p}X_{\reg} = \bC\langle(a,b)\rangle$, then the only vectors $(a,b)$ which have this property are those verifying the equation $a^{2}+b^{2} =0$. This means that for any $p\in X_{\reg}$, one has either $T_{p}X_{\reg} = \bC\langle(x, ix)\rangle$
or $T_{p}X_{\reg} = \bC\langle(x, -ix)\rangle$. This implies that $X_{\reg}$ is one of the lines $\{ x \pm iy = \alpha\}$, for some $\alpha\in \bC$.\\

 \noindent  (b).
 Assume X is not a line. Let $p$ be a generic point on $X_{\reg}$ and $L_p$ the unique normal at $p$. $D_u$ has the  Morse   critical point $p$  for  generic points $u \in L_p$. Let $L_q$ be any other normal contaning $Q \in X_{\reg}$ as a Morse point then since $ED=1$ it follows that $L_q$ and $L_p$ don't intersect and are therefore parallel for almost all $q$. This implies that $X_{\reg}$ is contained in a line, which is not isotropic since $\EDdeg(X) \ne 0$.\\

\noindent  (c).  The hypothesis implies that for some point $u\in \Delta_{\ED}(X)$, the function $D_{u}$ has non-isolated singularity on $X$. Since this is necessarily contained in a single level of $D_{u}$, it follows that $X$ contains $\{ (x-u_{1})^{2}+ (y^{2}-u_{2})^{2} = \alpha\}$ for some $\alpha\in \bC$, and since $X$ is irreducible, the twofold conclusion follows.
\end{proof}

 %%%%%%%%%%%%%%%

 \subsection{The attractors of Morse points}\label{ss:attract}
 Let $X\subset \bC^{2}$ be a reduced affine variety of dimension 1.
 Let $\hat{u}\in \Delta_{\tED}$. A  point $p\in \overline{X}$ will be called \emph{attractor} for  $D_{\hat{u}}$ if there are   Morse points of $D_{u(s)}$ which abut to $p$ when $s\to 0$, where $u(s)$ is some path at $u(0):=\hat{u}$. 
 The attractors fall into three types,
  
 (1).  One or both points of $\overline{X} \cap Q^{\ity}$ may be attractors, as shown in Lemma \ref{l:atyp}.  See \S\ref{ss:morseinfty} for all details, and  Examples \ref{ex:morseind} and \ref{e:isotropic}.
 
(2).  Any $p\in \Sing X$ is an attractor, since  at least one Morse point of $D_{u(s)}$ abuts to it. See \S\ref{s:sing} for all details.
 
(3). Points $p\in X_{\reg}$ to which more than one Morse singularities of $D_{u(s)}$ abut. Such a point appears to be 
 a non-Morse singularity of $\Sing D_{\hat{u}}$,  and it varies with $\hat{u}\in \Delta_{\tED}$. See \S\ref{ss:reg}, and  Example \ref{ex:regatyp}.

%%%%%%%%%%%%%%%
\section{The atypical discriminant}\label{s:atyp}

We define the discriminant $\Delta^{\atyp}$ as the subset of $\Delta_{\ED}(X)$ which is due to the loss of Morse points to infinity, and we find its structure. As before, let $H^{\infty}$ be the line at infinity. 
\begin{definition}\label{d:atyp}
Let $\overline X$ denote the closure of $X$ in $\bP^2$ and $X^{\infty} :=\overline X\cap H^\ity$.
For a point $\xi\in X^{\infty}$, let $\Gamma$ be a local  branch of  $\overline X$ at $\xi$. 

 We denote by $\Delta^{\atyp}_{\xi}(\Gamma)\subset\Delta_{\ED}(X)$  the set of all points $u\in \bC^2$ such that there are a sequence
 $\{u_n\}_{n\geq 1}\subset \bC^2$ with $u_n\to u$, and a sequence
$\{\bx_n\}_{n\geq 1}\subset (\Gamma\m H^\ity)$ with $\bx_n\to\xi$, such that $(u_{n}-\bx_{n})|T_{\bx}X_{\reg}=0$. 
The \emph{atypical discriminant} is then defined as follows:
 $$\Delta^{\atyp} :=\bigcup_{\Gamma,\xi}  \Delta^{\atyp}_{\xi}(\Gamma)$$
  where the union runs over all local branches $\Gamma$ of $\overline X$ at all points $\xi\in X^{\infty}$. If the choosen branch is clear we write $\Delta^{\atyp}(X)$ .
  \end{definition}

%%%
\subsection{The structure of $\Delta^{\atyp}$}\label{s:struct}\ \\
Let $\gamma:B\to \Gamma$ be a local holomorphic  parametrization of $\Gamma$ at $\xi$, where 
$B$ is some disk in $\bC$ centred at $0$ of small enough radius, and $\gamma(0)=\xi$. If $x$ and $y$ denote the coordinates 
of $\bC^2$, then for $t\in B$, we write $x(t)=x(\gamma(t))$ and $y(t)=y(\gamma(t))$.  It follows that the functions $x(t)$ and $y(t)$ are meromorphic on $B$ and holomorphic on $B\setminus{0}$. We thus may write them on some small enough disk $B'\subset B\subset \bC$ centred at the origin,  as follows:
$$x(t)=\frac{P(t)}{t^k}, \ y(t)=\frac{Q(t)}{t^k},$$
where $P(t)$ and $Q(t)$ are holomorphic, and $P(0)$ and $Q(0)$ are not both equal to zero. See also Lemma \ref{l:atyp}
 for the significance of the exponent $k$.
\medskip

Under these notations,  we have $\xi =[P(0);Q(0)]\in H^{\ity}$. For $t\in B\m\{0\}$ and $u=(u_1,u_2)\in\bC^2$,  we have:
$\bigl( (x(t),y(t)),u\bigr)\in \cE_X$ if and only if 
$$\frac{(tP'(t)-kP(t))}{t^{k+1}}\Big(\frac{P(t)}{t^k}-u_1\Big) + \frac{(tQ'(t)-kQ(t))}{t^{k+1}}\Big(\frac{Q(t)}{t^k}-u_2\Big)=0.$$

This yields a holomorphic function  $h:B\times\bC^2\to \bC$ defined as:
$$h(t,u)=\bigl(tP'(t)-kP(t)\bigr)(P(t)-u_1t^k) + \bigl(tQ'(t)-kQ(t)\bigr)\bigl(Q(t)-u_2t^k\bigr) $$ 
which is linear in the coordinates $(u_1,u_2)$.

For $t\in B\m\{0\}$ and $u\in\bC^2$, we then obtain the equivalence: 
\begin{equation}\label{eq:normal}
 \bigl( (x(t),y(t)),u\bigr)\in \cE_X  \Longleftrightarrow h(t,u)=0.
\end{equation}

\medskip

If we write  $h(t,u)=\sum_{j\geq 0} h_j(u)t^j$, then we have:
\begin{lemma}
\begin{enumerate}
\item For any $j\leq k-1$,  $h_j(u)=h_j\in\bC$, for all $u\in \bC^{2}$ and  \\ $h_0 = -k (P(0)^2 + Q(0)^2)$,

\item The function $h_k(u)$ is of the form $h_k(u)=kP(0)u_1 + kQ(0)u_2+\text{constant}$. 
Since $P(0)$ and $Q(0)$ are not both zero  by our assumption, it also follows that the function $h_k(u)$ is not constant;

\item For any $i>k$, the function $h_i(u)$ is a  (possibly constant) linear function.
\end{enumerate}\end{lemma}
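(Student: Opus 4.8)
The plan is to split $h$ into its $u$-independent and $u$-dependent parts and then simply read off the Taylor coefficients in $t$. Expanding the product in the definition and collecting terms according to their dependence on $u$, one writes
$$h(t,u) = A(t) + u_1 B_1(t) + u_2 B_2(t),$$
with the $u$-independent series $A(t) := (tP'(t)-kP(t))P(t) + (tQ'(t)-kQ(t))Q(t)$ and the two coefficient series $B_1(t) := -t^k(tP'(t)-kP(t))$ and $B_2(t) := -t^k(tQ'(t)-kQ(t))$, all holomorphic on $B'$. The single observation that drives everything is that $B_1$ and $B_2$ are each divisible by $t^k$; indeed $B_1(t) = kt^k P(t) - t^{k+1}P'(t)$ and likewise for $B_2$. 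Consequently the entire $u$-dependence of $h$ lives in orders $t^k$ and higher, while the coefficients of $t^0,\dots,t^{k-1}$ are inherited verbatim from the series $A(t)$.

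This gives part (a) at once: for $j \leq k-1$ the coefficient $h_j(u)$ equals the $j$-th Taylor coefficient of $A$, a number independent of $u$. Evaluating at $t=0$ yields $h_0 = A(0) = -kP(0)^2 - kQ(0)^2 = -k(P(0)^2 + Q(0)^2)$, as asserted. For part (b) I would extract the coefficient of $t^k$ from each summand: the $u_1$-contribution comes solely from $B_1(t) = kt^k P(t) - t^{k+1}P'(t)$, whose lowest-order term is $kP(0)\,t^k$, so the coefficient of $u_1 t^k$ is $kP(0)$; symmetrically the coefficient of $u_2 t^k$ is $kQ(0)$. Adding the constant $t^k$-coefficient of $A$ gives $h_k(u) = kP(0)u_1 + kQ(0)u_2 + \text{const}$. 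Since $P(0)$ and $Q(0)$ are not both zero by hypothesis, at least one linear coefficient is nonzero, so $h_k$ is genuinely non-constant.

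Part (c) needs no computation: $h(t,u)$ is visibly affine-linear in $(u_1,u_2)$ for every fixed $t$, hence so is each coefficient $h_i(u)$ in its $t$-expansion; for $i>k$ the linear part may or may not vanish, which is exactly the claimed \emph{possibly constant} linear function. There is no real obstacle here, as the whole statement reduces to bookkeeping once the splitting $h = A + u_1 B_1 + u_2 B_2$ is made and the divisibility of $B_1,B_2$ by $t^k$ is noticed. The only point that demands a little care is the order-$t^k$ extraction in (b): one must verify that the term $-t^{k+1}P'(t)$ contributes nothing in degree $t^k$, so that the linear coefficient is precisely $kP(0)$ and not some expression involving $P'(0)$.
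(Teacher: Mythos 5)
Your proof is correct: the splitting $h(t,u)=A(t)+u_1B_1(t)+u_2B_2(t)$ with $B_1,B_2$ divisible by $t^k$ is exactly the computation the lemma rests on, and your extraction of the $t^k$-coefficients $kP(0)$ and $kQ(0)$ (noting that $-t^{k+1}P'(t)$ contributes nothing in degree $k$) is right. The paper states this lemma without proof, treating it as routine bookkeeping from the displayed formula for $h(t,u)$, which is precisely what you have carried out.
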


Let us point out the geometric interpretation of the integer $k$, and the role of the isotropic points at infinity:
\begin{lemma} \label{l:atyp}\ 
Let  $\xi \in X^{\ity}$ and  let $\Gamma$ be a branch of $\overline{X}$ at $\xi$. Then:
\begin{enumerate}
\item  $k = \mult_{\xi}(\Gamma, H^{\ity})$.
\item  Let $Q^\ity := \{x^{2} + y^{2} =0\} \subset H^\ity$.
If $\xi \not\in X^{\ity}\cap Q^\ity=\emptyset$ then $\Delta^{\atyp}_{\xi}(\Gamma)=\emptyset$. 
\end{enumerate} 
\end{lemma}
\begin{proof}
\noindent (a). Since $P(0)$ and $Q(0)$ are not both zero, let us assume  that $P(0) \not= 0$. In coordinates at $\xi\in H^{\ity}\subset \bP^{2}$ we then have $z=\frac1x$ and $w = \frac{y}{x}$.  Composing with the parametrization of $\Gamma$ we get $z(t) = \frac{1}{x(t)} = t^{k}r(t)$ where $r$ is holomorphic and $r(0) \not= 0$.  We therefore get:
\begin{equation}\label{eq:PQ}
 \mult_{\xi}(\Gamma, H^{\ity}) = \ord_{0} z(t) = k,
\end{equation}
and observe this holds also in the other case $Q(0) \not= 0$.

\noindent  (b).
If $\xi \not\in X^\ity\cap Q^\ity$ then, for any branch $\Gamma$ of $\overline{X}$ at $\xi$, we have
$P(0)^2+Q(0)^2\neq 0$, hence $h_0\neq 0$. This shows that the equation $h(t,u)=0$ has no solutions in a small enough neighbourhood of $\xi$.  
\end{proof}

\begin{theorem} \label{t:atyp} \
Let $\xi\in X^{\ity}\cap Q^\ity$, and let $\Gamma$ be a branch of $\overline{X}$ at $\xi$. Then:
\begin{enumerate}
\item $u\in \Delta^{\atyp}_{\xi}(\Gamma)$ if and only if $\ord_{t}h(t,u) \ge 1+ \mult_{\xi}(\Gamma, H^{\ity})$.
\item   If $\Delta^{\atyp}_{\xi}(\Gamma)\neq\emptyset$, then $\Delta^{\atyp}_{\xi}(\Gamma)$ is the line $\{u\in \bC^{2} \mid h_k(u)=0\}$. 

In particular,  $\Delta^{\atyp}$ is a finite union of isotropic lines.
\end{enumerate}
\end{theorem}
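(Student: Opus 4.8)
The plan is to reduce everything to the single holomorphic function $h(t,u)$ and to the translation of the defining condition: by the equivalence \eqref{eq:normal}, a point $u$ lies in $\Delta^{\atyp}_\xi(\Gamma)$ if and only if there are sequences $t_n\to 0$ with $t_n\neq 0$ and $u_n\to u$ such that $h(t_n,u_n)=0$ (the corresponding points $\gamma(t_n)\in\Gamma\m H^{\infty}$ automatically tend to $\xi$). First I would record the convenient grouping
\[
h(t,u)=F(t)+t^{k}\bigl(A(t)u_1+B(t)u_2\bigr),
\]
where $F$ collects the $u$-independent terms, $A(t)=-(tP'(t)-kP(t))$ and $B(t)=-(tQ'(t)-kQ(t))$, so that $A(0)=kP(0)$ and $B(0)=kQ(0)$ are not both zero. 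Since the $u$-dependent part is divisible by $t^{k}$, the coefficients $h_j$ with $j\le k-1$ coincide with those of $F$, and $h_0=-k\bigl(P(0)^2+Q(0)^2\bigr)=0$ because $\xi\in Q^{\infty}$. Throughout I use Lemma~\ref{l:atyp}(a) to read $1+\mult_\xi(\Gamma,H^{\infty})$ as $k+1$.

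For the direction ``$\Rightarrow$'' of (a), I would take $u=\lim u_n$ with $h(t_n,u_n)=0$. As $u_n$ is bounded and $A,B$ are holomorphic at $0$, the $u$-dependent part is $O(t_n^{k})$, whence $F(t_n)=O(t_n^{k})$; since $F$ is holomorphic this forces $\ord_0 F\ge k$, i.e.\ $h_0=\cdots=h_{k-1}=0$. Then $F(t)/t^{k}$ is holomorphic with limit equal to the coefficient of $t^{k}$ in $F$, and dividing $h(t_n,u_n)=0$ by $t_n^{k}$ and letting $n\to\infty$ gives $A(0)u_1+B(0)u_2+\lim_{t\to0}F(t)/t^{k}=0$, which is exactly $h_k(u)=0$. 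Together with the vanishing of the lower coefficients this yields $\ord_t h(t,u)\ge k+1$.

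For ``$\Leftarrow$'', assume $\ord_t h(t,u)\ge k+1$. Then the constants $h_0,\dots,h_{k-1}$ vanish, so $\ord_0 F\ge k$ and $F(t)/t^{k}$ is holomorphic, while $h_k(u)=0$. For each small $t\neq 0$ the equation $h(t,\cdot)=0$ defines a line $L_t$ (as $A(0),B(0)$ are not both zero), and the normalized coefficients $\bigl(A(t),B(t),-F(t)/t^{k}\bigr)$ converge as $t\to0$ to those of the line $L_0=\{h_k=0\}$. I would then produce the sequence explicitly: assuming say $P(0)\neq0$, I solve $L_{t_n}$ for the first coordinate with the second coordinate fixed equal to that of $u$; the resulting points $u_n\in L_{t_n}$ converge to $u$ precisely because $u\in L_0$, which exhibits $u\in\Delta^{\atyp}_\xi(\Gamma)$.

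Part (b) then follows formally from (a): $\Delta^{\atyp}_\xi(\Gamma)=\{u:\ord_t h(t,u)\ge k+1\}$ is empty when some $h_j$ with $j\le k-1$ is nonzero, and otherwise equals $\{h_k(u)=0\}$, a genuine line since $h_k$ is non-constant. Writing $h_k(u)=kP(0)u_1+kQ(0)u_2+\mathrm{const}$, this line meets $H^{\infty}$ at $[Q(0):-P(0)]$, and $Q(0)^2+(-P(0))^2=P(0)^2+Q(0)^2=0$ because $\xi\in Q^{\infty}$, so it is isotropic. Finally $\Delta^{\atyp}$ is the union of such lines over the finitely many $\xi\in X^{\infty}\cap Q^{\infty}$ and the finitely many branches at each, hence a finite union of isotropic lines. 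I expect the main obstacle to be the order bookkeeping in the ``$\Rightarrow$'' step, namely ruling out that cancellation drops $\ord_0 F$ below $k$ while the varying $u_n$ still satisfy $h(t_n,u_n)=0$; the explicit solution of $L_{t_n}$ in the ``$\Leftarrow$'' step is then routine.
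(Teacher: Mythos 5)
Your proof is correct, and for the crucial ``$\Leftarrow$'' implication of (a) it takes a genuinely different route from the paper. The paper argues by contradiction with a dimension count: if $\widetilde h$ had no zeros with $t\neq 0$ near $(0,u)$, then $Z(\widetilde h)\cap(D\times V)$ would be contained in $\{0\}\times Z(h_k)$, of dimension at most $1$, contradicting the fact that the nonempty hypersurface $Z(\widetilde h)$ has dimension $2$. You instead exploit the linearity of $h(t,\cdot)$ in $u$: for each small $t\neq 0$ the fibre $\{h(t,\cdot)=0\}$ is a line $L_t$ whose coefficients $\bigl(A(t),B(t),F(t)/t^k\bigr)$ converge to those of $L_0=\{h_k=0\}$, and you produce the approximating centres $u_n\in L_{t_n}$ explicitly by solving for one coordinate. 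This is more elementary and constructive, avoiding any appeal to the dimension theory of analytic sets, though it uses the linearity in $u$ in an essential way, whereas the paper's argument would survive for an $h$ merely holomorphic in $(t,u)$. Your ``$\Rightarrow$'' direction --- bounding the $u$-dependent part by $O(t_n^k)$ to force $\ord_0 F\ge k$ and then passing to the limit in $h(t_n,u_n)/t_n^k=0$ --- is essentially the paper's uniform-convergence argument in a slightly more direct, sequence-based form; and your verification that the line $\{h_k=0\}$ is isotropic (its point at infinity $[Q(0);-P(0)]$ satisfies $P(0)^2+Q(0)^2=0$, and in fact equals $\xi$) makes explicit a step the paper only asserts at the end of its proof.
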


\begin{proof}
\noindent (a). \sloppy
We have to show that 
 $u\in \Delta^{\atyp}_{\xi}(\Gamma)$ if and only if $h_0=\cdots = h_{k-1}=0$ in $h(t,u)$, and $h_{k}(u) =0$.
If $h_0,\ldots, h_{k-1}$ are not all equal to $0$, then let $0\leq j_1\leq k-1$ be the first index such that  $h_{j_1}\neq 0$.
We then have: 
$$h(t,u)=t^{j_1}\Big(h_{j_1}+\sum_{j>j_1}h_j(u)t^{j-j_1}\Big).$$ 
Let $K$ be a compact subset of $\bC^2$ containing a neighborhood of some point $\hat{u}\in \Delta^{\atyp}_{\xi}(\Gamma)$.
   Then, since $(t,u)\to \sum_{j>j_1}h_j(u)t^{j-j_1}$ is holomorphic,  we get $\lim_{t\to 0} \sum_{j>j_1}h_j(u)t^{j-j_1}= 0$ uniformly for $u\in K$.
This implies that $h(t,u)\neq 0$, for $|t|\neq 0$ small enough, and for all $u\in K$, which contradicts the assumption  that  $\hat{u}\in \Delta^{\atyp}_{\xi}(\Gamma)$. We conclude that  $\Delta^{\atyp}_{\xi}(\Gamma)=\emptyset$. The continuation and the reciprocal will be proved in (b).

\medskip

\noindent (b). Let us assume now that $h_0=\cdots =h_{k-1}=0$. We then write $h(t,u)=t^k\widetilde h(t,u)$ where 
\begin{equation}\label{eq:morseinfty}
 \widetilde h(t,u)=h_k(u)+\sum_{j>k}h_j(u)t^{j-k}.
\end{equation}
We have to show that $u\in \Delta^{\atyp}_{\xi}(\Gamma)$ if and only if $h_k(u)=0$.
\medskip

``$\Rightarrow$'': If $h_k(u)\neq 0$, then a similar argument as at (a) applied to $\widetilde h(t,u)$ shows that $u\not\in \Delta^{\atyp}_{\xi}(\Gamma)$.
\medskip

``$\Leftarrow$'': Let $h_k(u_{1}, u_{2})=0$. We have to show that for every neighborhood $V$ of $u$ and every disk $D \subset B \subset \bC$  centred at the origin, there exist $v\in V$ and $t\in D\m\{0\}$ such that $\widetilde h(t,v)=0$.
Suppose that this is not the case. Denoting by $Z(\widetilde h)$ the zero-set of $\widetilde h$, we would  then have
$$\big(Z(\widetilde h)\cap (D\times V)\big)\subset \{0\} \times V.$$
We also have the equality $Z(\widetilde h)\cap (\{0\} \times V)=\{0\} \times Z(h_k)$. It would follow the inclusion:
\begin{equation}\label{eq:inclZ}
  \big(Z(\widetilde h)\cap (D\times V)\big)\subset \{0\} \times Z(h_k).
\end{equation}

The set  $\{0\} \times Z(h_k)$ has dimension at most 1, while $Z(\widetilde h)\cap (D\times V)$ has dimension 2 since it cannot be empty, as $\widetilde h(u,0)=0$. We obtain in this way a contradiction to the inclusion \eqref{eq:inclZ}.

This shows in particular that $\Delta^{\atyp}_{\xi}(\Gamma)$ is an isotropic line which contains the point $\xi$ in its closure at infinity.
We finally note that $\Delta^{\atyp}$ is the union of $\Delta^{\atyp}_{\xi}(\Gamma)$ over all branches at infinity of $\overline{X}$, thus $\Delta^{\atyp}$ is a union of  isotropic lines.
\end{proof}

\begin{corollary}\label{c2}
Let $\Gamma$ be a branch of $\overline X$ at $\xi \in X^\ity\cap Q^{\ity}$. 

Then $\Delta^{\atyp}(\Gamma)\neq\emptyset$ if and only if   $\Gamma$  is not tangent  at $\xi$ to  the line at infinity $H^\ity$.
\end{corollary}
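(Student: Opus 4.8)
The plan is to reduce the statement to a purely numerical condition on the branch via Theorem~\ref{t:atyp}(a), and then to read that condition off from the geometry of $\Gamma$ relative to $H^\ity$. Since $\xi\in Q^\ity$, the coefficient $h_0=-k(P(0)^2+Q(0)^2)$ vanishes automatically, and by Theorem~\ref{t:atyp}(a) together with the fact that $h_k(u)$ is a non-constant linear form (so $\{h_k=0\}$ is always a nonempty line in $\bC^2$), one has
$$\Delta^{\atyp}_\xi(\Gamma)\neq\emptyset \iff h_0=h_1=\cdots=h_{k-1}=0.$$
Thus the whole matter reduces to deciding when the constants $h_1,\dots,h_{k-1}$ all vanish.

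First I would isolate the $u$-independent part of $h$. Writing $S(t):=P(t)^2+Q(t)^2$, the terms of $h(t,u)$ of order $<k$ in $t$ come only from $A(t):=(tP'-kP)P+(tQ'-kQ)Q$, because the $u$-dependent part of $h$ carries a factor $t^k$. A direct simplification gives the clean identity
$$A(t)=\tfrac12\,t\,S'(t)-k\,S(t),$$
so that if $S(t)=\sum_j s_j t^j$ then $h_j=\tfrac{j-2k}{2}\,s_j$ for every $j\le k-1$. Since $\tfrac{j-2k}{2}\neq 0$ for $0\le j\le k-1$, the vanishing $h_0=\cdots=h_{k-1}=0$ is equivalent to $s_0=\cdots=s_{k-1}=0$, i.e. to $\ord_0 S\ge k$. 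Setting $m:=\ord_0(P^2+Q^2)$, this will have shown
$$\Delta^{\atyp}_\xi(\Gamma)\neq\emptyset \iff m\ge k.$$

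The remaining — and genuinely geometric — step is to identify $m\ge k$ with the non-tangency of $\Gamma$ to $H^\ity$ at $\xi$. Since $\xi\in Q^\ity$ forces $P(0)\neq 0$ and $Q(0)=\pm i P(0)$ (note $[0;1]\notin Q^\ity$), I may normalise $\xi=[1;i]$, the case $[1;-i]$ being symmetric. Factoring $P^2+Q^2=(P+iQ)(P-iQ)$ and using $(P-iQ)(0)=2\neq 0$ gives $m=\ord_0(P+iQ)=\ord_0(Q-iP)$. In the affine chart $z=1/x$, $w=y/x$ around $\xi=(0,i)$, the branch is parametrised by $z(t)=t^k/P(t)$ and $w(t)-i=(Q-iP)/P$, so $\ord_0 z(t)=k=\mult_\xi(\Gamma,H^\ity)$ (recovering Lemma~\ref{l:atyp}(a)) while $\ord_0(w(t)-i)=m$. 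Hence the branch multiplicity is $\mult_\xi\Gamma=\min(k,m)$, and $\Gamma$ is tangent to $H^\ity=\{z=0\}$ at $\xi$ exactly when $\mult_\xi(\Gamma,H^\ity)>\mult_\xi\Gamma$, i.e. when $m<k$. Therefore $m\ge k$ is precisely non-tangency, which closes the argument.

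I expect the main obstacle to be this last translation. The algebra delivers the clean threshold $\ord_0(P^2+Q^2)\ge k$ almost for free once the identity $A=\tfrac12 tS'-kS$ is spotted, but one must correctly interpret $\ord_0 S$ through the factorisation at an isotropic point and then compare it with $k=\mult_\xi(\Gamma,H^\ity)$ in order to recover the classical tangency condition. A secondary point requiring care is the legitimacy of the normalisation and of the chart $z=1/x$, which rests on verifying that $\xi\in Q^\ity$ indeed excludes $P(0)=0$.
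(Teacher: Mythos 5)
Your argument is correct, and while it turns on the same pivot as the paper's proof --- Theorem \ref{t:atyp}(a), which reduces everything to deciding when $\ord_t h(t,u)$ can reach $k+1$ --- the way you extract the numerical criterion is genuinely different. The paper fixes the chart $x=w/z$, $y=1/z$ at the isotropic point (so $Q\equiv 1$, $P=w$), splits into two cases according to whether the parametrization has the form $w=i+t^kP_1(t)$ or $w=i+a_rt^r+\hot$ with $r<k$, and expands $h$ explicitly in each case. You instead observe that $h_0,\dots,h_{k-1}$ come only from the $u$-independent part $A=(tP'-kP)P+(tQ'-kQ)Q$, and the identity $A=\tfrac12\,tS'-kS$ with $S=P^2+Q^2$ gives $h_j=\tfrac{j-2k}{2}s_j$ for $j<k$, whence the single clean criterion $\Delta^{\atyp}_\xi(\Gamma)\neq\emptyset\iff\ord_0(P^2+Q^2)\ge k$; the factorization $(P+iQ)(P-iQ)$ at an isotropic point then converts this into $\ord_0(w(t)-i)\ge k$, i.e.\ non-tangency. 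Your route avoids the case split and the chart normalization until the last step, and produces an intrinsic intermediate statement, $\ord_0(P^2+Q^2)\ge\mult_\xi(\Gamma,H^\ity)$, which is not isolated in the paper and is quotable on its own; the paper's more pedestrian expansion has the minor advantage of exhibiting the explicit equation $iu_1+u_2+C=0$ of the line along the way. Two small points you should make explicit: the parametrization must be taken primitive so that $\mult_\xi\Gamma=\min(k,m)$, and the degenerate case $P^2+Q^2\equiv 0$ (an isotropic line, cf.\ Remark \ref{r:ordinfty}) is excluded, as it is implicitly throughout the paper.
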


\begin{proof}
 Let us assume $\xi = [i;1]$, since a similar proof works for the other point of $Q^{\ity}$. 
  Let  $(w, z)$ be local coordinates of $\bP^2$ at $\xi$, such that $H^\ity=\{z=0\}$ and we have:
$$\ x=\frac{w}{z}, \ y=\frac{1}{z}.$$
 Our hypothesis ``$H^\ity$ is not tangent to $\Gamma$ at $\xi$'' implies that we may choose a parametrization for $\Gamma$ at $\xi$ of the form $z(t)=t^k$, $w(t)=i+t^kP_1(t)$,    where $P_1$ is a holomorphic function on a neighborhood of the origin, and where $\ord_0 z(t) = k =  \mult_{\xi}(\Gamma, H^{\ity})\ge 1$, as shown in \eqref{eq:PQ}.

Under the preceding notations,  we have $Q(t)\equiv 1$, $P(t)=i+t^k P_1(t)$, and we get 
\begin{align*}
h(t,u)&=\bigl(t^kP_1'(t)-ki\bigr)\bigl(i+t^kP_1(t)-u_1t^k\bigr))-k+ku_2t^k\\
&=t^k\Big[t P_1'(t)\bigl(i+t^kP_1(t)-u_1t^k\bigr)-kiP_1(t)+kiu_1+ku_2\Big]
\end{align*}

By Theorem \ref{t:atyp}(a),   $u \in \Delta^{\atyp}(\Gamma)\neq\emptyset$ if and only if $\ord_t h(t,u) \ge 1+k$.  From the above expression of $h(t,u)$ we deduce:  $\ord_t h(t,u) \ge 1+k$ $\Longleftrightarrow$ $iu_1+u_2 + C=0$, where $C = iP_1'(0) - iP_1(0)$ is a constant. This is the equation of an isotropic line. We deduce that $\Delta_{\xi}^{\atyp}(\Gamma)$ is this line, and therefore it is not empty.

\

Reciprocally, let us assume now that $\Gamma$ is tangent to  $H^\ity$ at $\xi$. By Lemma \ref{l:atyp}(a), this implies $k\ge 2$.  
A parametrization for $\Gamma$ is of the form $z(t)=t^k$, $w(t)=i+\sum_{j\geq r}a_jt^j$, where $1\le r<k$.  

As before,  we have $Q(t)\equiv 1$ and $P(t)=i+a_rt^r+\hot$  The expansion of $h(t,u)$ looks then as follows:
\begin{align*}
h(t,u)&=\bigl(tP'(t)-kP(t)\bigr)(P(t)-u_1t^k) + \bigl(tQ'(t)-kQ(t)\bigr)\bigl(Q(t)-u_2t^k\bigr) \\
&= k ( -i r_0 + i u_1 - q_0 u_2) + (1-k)[ i r_1 - q_1 u_2]  t\\  
& \; \;  \; \; \; \; \; + (2-k) [ i r_2 -q_2 u_2] t^2 + (3-k)[i r_3 -q_3 u_2] t^3 + \hot
\end{align*}
We have  $a_r\not= 0$, $r-2k\neq 0$ since $r<k$, thus $\ord_t h(t,u) < k$. Then  Theorem \ref{t:atyp}(a) tells that  $\Delta^{\atyp}_{\xi}(\Gamma)=\emptyset$. 
\end{proof}

\subsection{Morse numbers at infinity}  \label{ss:morseinfty}
We have shown in \S\ref{s:struct} that $\Delta^{\atyp}$ is a union of lines.
Our purpose is now to fix a point $\xi \in \overline{X}\cap Q^{\ity}$ and find the number of Morse singularities of $D_{u}$
which abut to it when the centre $u$  moves  from outside $\Delta^{\atyp}$ toward some $\hat{u}\in \Delta^{\atyp}$.  We will in fact do much more than that. 

Let $\Gamma$ be a local  branch of  $\overline X$ at $\xi$. We assume that $\hat{u}\in \Delta^{\atyp}_{\xi}(\Gamma)\subset \Delta^{\atyp}$,  as defined in  \S\ref{s:struct}. 

We will now prove the formula for the number of  Morse points which are lost at infinity.  

\begin{theorem}\label{t:morseinfty}
  Let $\hat{u}\in \Delta^{\atyp}_{\xi}(\Gamma)\not= \emptyset$. Let $\cB\in \bC $ denote a small disk centred at the origin, and let $u: \cB \to \bC^{2}$ be a continuous path such that $u(0) = \hat{u}$, and that  $h_k(u(s)) \not= 0$ for all $s\not=0$.
  
   Then the number of  Morse points of $D_{u(s)}$, 
which abut to $\xi$ along $\Gamma$ when $s\to 0$ is: 
 
\begin{equation}\label{eq:morseinfty}
  m_{\Gamma}(\hat{u}) := \ord_{0}\Bigl(\sum_{j>k}h_j(\hat{u})t^{j}\Bigr) - \mult_{\xi}(\Gamma, H^{\ity})
\end{equation}
if $\ord_{0}\sum_{j>k}h_j(\hat{u})t^{j}$ is finite.  In this case, the integer $m_{\Gamma}(\hat{u}) >0$ is independent of the choice of the path $u(s)$ at $\hat{u}$.  
\end{theorem}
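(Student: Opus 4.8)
The plan is to reduce the count of abutting Morse points to a count of zeros near $t=0$ of the one-variable holomorphic family $\widetilde h(\cdot,u(s))$, and then to control this count by the argument principle. First I would make explicit the link between critical points of $D_u$ on $\Gamma$ and zeros of $h$. By \eqref{eq:normal}, for $t\neq 0$ small the point $\gamma(t)$ is a critical point of $D_u$ exactly when $h(t,u)=0$. Differentiating $D_u\circ\gamma$ directly gives, for $t\neq 0$,
\[ \frac{\d}{\d t}\bigl(D_u\circ\gamma\bigr)(t)=\frac{2\,h(t,u)}{t^{2k+1}}=\frac{2\,\widetilde h(t,u)}{t^{k+1}}, \]
using the factorization $h=t^k\widetilde h$. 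Since $\gamma$ is an immersion on $B\m\{0\}$, the critical point $\gamma(t)$ is \emph{Morse} (non-degenerate) exactly when $t$ is a \emph{simple} zero of $\widetilde h(\cdot,u)$.

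Next I would evaluate the order at the endpoint. As $\hat u\in\Delta^{\atyp}_{\xi}(\Gamma)$, Theorem \ref{t:atyp}(a) gives $h_0=\cdots=h_{k-1}=0$ and $h_k(\hat u)=0$, so
\[ \widetilde h(t,\hat u)=\sum_{j>k}h_j(\hat u)\,t^{j-k},\qquad \ord_0\widetilde h(\cdot,\hat u)=\mu-k, \]
where $\mu:=\ord_0\sum_{j>k}h_j(\hat u)t^{j}\ge k+1$ is finite by hypothesis. Thus $t=0$ is a zero of $\widetilde h(\cdot,\hat u)$ of multiplicity exactly $m_{\Gamma}(\hat u)=\mu-k\ge 1$, which already yields $m_{\Gamma}(\hat u)>0$.

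The heart of the argument is a continuity-of-zeros statement. I would fix $\varepsilon>0$ so small that $t=0$ is the only zero of $\widetilde h(\cdot,\hat u)$ in $\{|t|\le\varepsilon\}$. By continuity of $(t,u)\mapsto\widetilde h(t,u)$ and compactness of $\{|t|=\varepsilon\}$, for $u$ near $\hat u$ the function $\widetilde h(\cdot,u)$ has no zero on that circle, so the integer winding number
\[ \frac{1}{2\pi i}\oint_{|t|=\varepsilon}\frac{\partial_t\widetilde h(t,u)}{\widetilde h(t,u)}\,\d t \]
is a continuous, hence constant, function of $u$, equal to $m_{\Gamma}(\hat u)$. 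Therefore for $|s|$ small $\widetilde h(\cdot,u(s))$ has exactly $m_{\Gamma}(\hat u)$ zeros in $\{|t|<\varepsilon\}$ counted with multiplicity. Because $h_k(u(s))\neq 0$ for $s\neq 0$, we have $\widetilde h(0,u(s))=h_k(u(s))\neq 0$, so none of these zeros sits at the origin: all of them lie in $0<|t|<\varepsilon$ and, as $\varepsilon$ may be taken arbitrarily small, they converge to $\xi$ along $\Gamma$. For all but finitely many $s$ in the punctured disk these zeros are simple — the bad values form the zero set of the discriminant of $\widetilde h(\cdot,u(s))$ in $t$, an analytic and hence discrete subset — so they are $m_{\Gamma}(\hat u)$ distinct Morse points. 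Independence of the path is then automatic, since the count equals the fixed multiplicity $m_{\Gamma}(\hat u)$ of the zero of $\widetilde h(\cdot,\hat u)$ at the origin, the path entering only through the condition $h_k(u(s))\neq 0$ that pushes the zeros off $0$.

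I expect the main obstacle to be the uniformity in the winding-number step: one must fix $\varepsilon$ before letting $s\to 0$ and guarantee that no zero crosses $\{|t|=\varepsilon\}$ for small $s$. This is exactly the compactness-and-continuity control already exploited in the proof of Theorem \ref{t:atyp}, transplanted to the punctured family. A secondary, more bookkeeping point is to confirm cleanly that simple zeros of $\widetilde h$ correspond to non-degenerate critical points of $D_{u(s)}$ through the immersion $\gamma$ on $B\m\{0\}$, and that the set of $s$ producing multiple zeros is genuinely discrete rather than all of $\cB$.
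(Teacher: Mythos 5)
Your proposal is correct and follows essentially the same route as the paper: both reduce the count to the zeros of $\widetilde h(t,u(s))$ converging to $t=0$ and identify that count with $\ord_0\widetilde h(\cdot,\hat u)=\ord_0\bigl(\sum_{j>k}h_j(\hat u)t^j\bigr)-k$, the paper simply asserting the continuity-of-zeros step that you justify explicitly via the argument principle. Your additional care about simple zeros corresponding to Morse points is a detail the paper leaves implicit, not a divergence in method.
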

\begin{remark}\label{r:ordinfty}
  The excepted case in Theorem \ref{t:morseinfty} is in fact a very special curve $X$. Indeed,   the order $\ord_{0}\sum_{j>k}h_j(\hat{u})t^{j}$ is infinite if and only if the series is identically zero,  and this is equivalent to $X =\ {  (x-\hat{u}_1})^2 + (y-\hat{u}_2)^2= \alpha \} $, for some $\alpha\in \bC$.
\end{remark}

\begin{proof}[Proof of Theorem \ref{t:morseinfty}]
We  will use Theorem \ref{t:atyp}, its preliminaries and its proof with all notations and details.
Replacing $u$ by $u(s)$ in \eqref{eq:morseinfty} yields:
$$\widetilde h(t,u(s)) =h_k(u(s))+\sum_{j>k}h_j(u(s))t^{j-k}.$$
Note that by our choice of the path $u(s)$ we have that $h_k(u(s)) \not= 0$ for all $s\not=0$ close enough to 0.

The number of Morse points which abut to $\xi$ is precisely the number of solutions in variable $t$ of the equation 
$\widetilde h(t,u(s))=0$ which converge to 0 when $s\to 0$. 
This is precisely equal to $\ord_{t}\sum_{j>k}h_j(\hat{u})t^{j-k}$, and we remind that $k = \mult_{\xi}(\Gamma, H^{\ity})$ by Lemma \ref{l:atyp}(a).  In particular, this result is independent of the choice of the path $u(s)$. 
Since we have assumed that $\Delta^{\atyp}_{\xi}(\Gamma)\not= \emptyset$, there must exist Morse singularities which abut at $\xi = \Gamma\cap Q^\ity$, thus $m_{\Gamma}(\hat{u}) >0$.
\end{proof}

%%%%%%%%%
\begin{remark}\label{r:morseinfty}
For $j\ge k$, the set  $L_{j}:= \{ h_{j}(u) =0 \}$ is a line if $h_{j}(u) \not\equiv 0$.
 The number of Morse points $m_{\Gamma}(u)$ interprets as $j-k$, where  $j>k$ is  the first index such that $L_{j}$ is a nonempty line, different from $L_k$.
 
  Since $L_{j}\cap L_{k}$ consists of at most one point, it follows that the number $m_{\Gamma}(\hat{u})$   is constant for all points $\hat{u}\in L_{k}$, except possibly at the single point $\tilde{u} = L_{j}\cap L_{k}$, for which the Morse number $m_{\Gamma}(\tilde{u})$ takes a  higher value, or  $\widetilde h(t,\tilde{u}) \equiv 0$, in which case $D_{\tilde{u}}$ has non-isolated singularities, see Remark \ref{r:ordinfty}. The generic number will be denoted by $m^{\gen}_{\Gamma}$. See Examples \ref{ex:morseind} and \ref{e:isotropic}. The single point $\tilde{u}$, where $m_{\Gamma}(u)$ jumps we will call the \textit{`focal'} point on $\Delta^{\atyp}_{\xi}$.   
  \end{remark}

\begin{definition}\label{d:morseinfty}
 We call $m^{\gen}_{\Gamma}$ the  \emph{generic Morse number at $\xi$ along $\Gamma$}. The number:
 $$ m^{\gen}_{\xi} := \sum_{\Gamma} m^{\gen}_{\Gamma}
 $$
 will be called the  \emph{generic Morse number at $\xi$}.
 
   We say that $m_{\Gamma}(\hat u) > m^{\gen}_{\Gamma}$ is the \emph{exceptional Morse number at $\xi$ along $\Gamma$}, whenever this
 point  exists and the associated number is finite.  We will call  $\hat u\in L_k$ the focal point on $\Delta^{\atyp}_{\xi}$.  
 \end{definition}

See Example \ref{ex:morseind} where $\hat u$ exists but the number $m_{\Gamma}(\hat u)$ is not defined because the order is infinite (cf Theorem \ref{t:morseinfty}).

%%%%%%%%%%%%%%
 \section{Structure of $\Delta^{\sing}$, and Morse numbers at the attractors of  $\Sing X$.}\label{s:sing}

  We consider $X$ with reduced structure; consequently $X$ has at most isolated singularities.   We recall from 
  the Introduction  that $\Delta^{\sing}$ is the subset of points  $u\in\Delta_{\tED}(X)$ such that, when $u(s) \to u(0) = u $,  at least a Morse point of $D_{u(s)}$ abuts to a singularity of $X$.

 \begin{theorem}\label{t:morseaffine}
Let $p\in \Sing X$. Then $p$ is an attractor, and: 
\begin{enumerate}
 \item The singular discriminant is the union 
 $$\Delta^{\sing} = \cup_{p\in \Sing X}\NCone_{p}X ,$$ 
 where $\NCone_{p}X$ denotes the union of all the normal lines at $p$ to the tangent cone $\Cone_{p}X$.
 \item  Let $\hat{u} \in N_p\Gamma$, the normal to $\Gamma$. The number of  Morse points of $D_{u(s)}$, 
which abut to $p$ along $\Gamma$ when $s\to 0$, is: 
   $$m_{\Gamma}(\hat{u}) :=  1 - \mult_{p}\Gamma + \ord_{t}\sum_{j\ge \mult_{p}\Gamma }h_j(\hat{u})t^{j}$$
 This number is independent of the choice of the path $u(s)$.
\end{enumerate}
\end{theorem}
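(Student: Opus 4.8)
The plan is to transport the local machinery of \S\ref{s:struct} and Theorem \ref{t:atyp} from the point at infinity to a finite singular point $p=(p_1,p_2)$, the only structural change being that the parametrization is now holomorphic rather than meromorphic. Fix a branch $\Gamma$ of $X$ at $p$ and a local holomorphic parametrization $\gamma\colon B\to\Gamma$, $\gamma(t)=(x(t),y(t))$, with $\gamma(0)=p$ and $\ord_0(\gamma(t)-p)=m:=\mult_p\Gamma$; thus $x(t)=p_1+a_mt^m+\cdots$ and $y(t)=p_2+b_mt^m+\cdots$ with $(a_m,b_m)\neq(0,0)$. The point $\gamma(t)$ is a critical point of $D_u$ exactly when $(u-\gamma(t))$ annihilates the tangent $\gamma'(t)$, which gives the holomorphic function
\begin{equation*}
h(t,u)=x'(t)\bigl(x(t)-u_1\bigr)+y'(t)\bigl(y(t)-u_2\bigr),
\end{equation*}
linear in $u$, and I expand $h(t,u)=\sum_{j}h_j(u)t^j$ as before. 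Since $\gamma'$ vanishes to order exactly $m-1$ at $0$, the series starts at $j=m-1$, and a direct computation gives $h_{m-1}(u)=m\bigl(a_m(p_1-u_1)+b_m(p_2-u_2)\bigr)$.

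For part (a) I would read off the normal line directly: $h_{m-1}(u)=0$ is exactly the equation $a_m(u_1-p_1)+b_m(u_2-p_2)=0$ of the line $N_p\Gamma$ normal at $p$ to the tangent direction $(a_m,b_m)$ of $\Gamma$. If $h_{m-1}(\hat u)\neq0$ then $\ord_t h(t,\hat u)=m-1$ and (as seen in (b)) no Morse point abuts to $p$; if $h_{m-1}(\hat u)=0$ then $\ord_t h(t,\hat u)\ge m$ and at least one Morse point abuts. Hence $u\in\Delta^{\sing}$ if and only if $u$ lies on $N_p\Gamma$ for some $p\in\Sing X$ and some branch $\Gamma$ at $p$, which is precisely $\bigcup_{p}\bigcup_{\Gamma}N_p\Gamma=\bigcup_{p}\NCone_pX$. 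Since every branch has a nonempty normal line and the count in (b) is then positive, each $p\in\Sing X$ is an attractor.

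For part (b) the crucial bookkeeping is to separate the \emph{spurious} root $t=0$, which merely records $p$ itself, from the genuine nearby Morse points. Because $\gamma'$ has order $m-1$ at $0$, for every $u$ with $h_{m-1}(u)\neq0$ the equation $h(t,u)=0$ has $t=0$ as a root of multiplicity exactly $m-1$, corresponding to the singular point $p$ and not to a Morse point. I would then apply the Weierstrass preparation theorem to $h$ at $(0,\hat u)$: writing $\ell:=\ord_t h(t,\hat u)$ (finite unless $h(t,\hat u)\equiv0$), we obtain $h=W\cdot V$ with $V$ a unit and $W$ a Weierstrass polynomial of degree $\ell$ in $t$ whose coefficients vanish at $\hat u$, so that all $\ell$ roots of $h(\cdot,u)$ near $0$ converge to $0$ as $u\to\hat u$. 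Along the path $u(s)$ with $h_{m-1}(u(s))\neq0$ for $s\neq0$, exactly $m-1$ of these roots stay locked at the spurious point $t=0$, while the remaining $\ell-(m-1)$ roots are nonzero and converge to $0$; since $D_{u(s)}$ is Morse for $s\neq0$ these are honest non-degenerate critical points abutting to $p$. As $\ell=\ord_t\sum_{j\ge m}h_j(\hat u)t^j$ (the coefficient $h_{m-1}(\hat u)$ vanishing on $N_p\Gamma$), this yields $m_\Gamma(\hat u)=1-m+\ell$, i.e.\ the stated formula, and the count depends only on $\hat u$ through $\ell$, hence is independent of the path.

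The main obstacle is the bookkeeping in the last paragraph: proving cleanly that the persistent root at $t=0$ has multiplicity exactly $m-1$ off the normal line, and that the liberated $\ell-(m-1)$ roots are genuine Morse points of $D_{u(s)}$ abutting to $p$ for the generic path (rather than escaping to other parts of $X$ or colliding at $p$). I would also dispose of the excluded case $\ell=\infty$, i.e.\ $h(t,\hat u)\equiv0$, which as in Remark \ref{r:ordinfty} forces $\Gamma$ to lie on a circle $(x-\hat u_1)^2+(y-\hat u_2)^2=\alpha$ and produces a non-isolated singularity of $D_{\hat u}$ rather than a finite Morse count.
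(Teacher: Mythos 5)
Your proof is correct and follows essentially the same route as the paper: expand $h(t,u)=x'(t)\bigl(x(t)-u_1\bigr)+y'(t)\bigl(y(t)-u_2\bigr)$ along a branch at $p$, identify $N_p\Gamma$ as the zero set of the leading coefficient $h_{m-1}(u)$, and obtain the Morse count as the order of the tail minus the multiplicity of the spurious root $t=0$. The only differences are cosmetic: you treat the paper's two cases $\alpha<\beta$ and $\alpha=\beta$ uniformly via the leading vector $(a_m,b_m)$, and you justify the root count explicitly with Weierstrass preparation where the paper simply asserts it.
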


\begin{proof}
\noindent
(a).  Let us consider a local branch $\Gamma$ of $X$ at $p\in \Sing X$. Let $\gamma: B \to \Gamma$ be a  parametrization of $\Gamma$ at $p$:
  $$x(t) :=x(\gamma(t)) = p_{1} + c_1 t^{\alpha} + \hot  \mbox{ and } y(t) :=y(\gamma(t))= p_{2} + c_2 t^{\beta} +\hot,$$
where where $B$ denotes a small enough disk in $\bC$ centred at $0$, $c_1 \not=0$ and $c_2 \not=0$ . We can suppose  $\beta \ge \alpha = \mult_{p}\Gamma>1$.
 
 We then have the equivalence:  
\begin{equation}\label{eq:singeq}
 \bigl( ((x(t),y(t)),u\bigr)\in \cE_X \Longleftrightarrow  x'(t)(x(t) -u_{1}) + y'(t)(y(t) -u_{2})  = 0.
\end{equation}
  
This equivalence  \eqref{eq:singeq} means that the number of Morse points  which abut to $p$ when $t\to 0$ is 
precisely the maximal number of solutions in $t$ of the equation:
\begin{equation}\label{eq:Morsesol}
h(t,u) :=   x'(t)\bigl( x(t) -u_{1}\bigr) + y'(t)\bigl( y(t) -u_{2}\bigr)  = 0
\end{equation}
which converge to 0 when $s\to 0$.

We have:
\[  x'(t)\bigl( x(t) -u_{1}\bigr) =c_1  \alpha (p_{1}-u_{1})t^{\alpha-1} + c_1 \alpha t^{2\alpha-1} +\hot,\]
\[  y'(t)\bigl( y(t) -u_{2}\bigr) = c_2\beta (p_{2}-u_{2})t^{\beta-1} + c_2\beta t^{2\beta-1} + \hot ,\]

We write $h(t,u)= \sum_{j\ge 0}h_j(u)t^{j}$ as a holomorphic function of variable $t$ with coefficients depending on $u$.
For any $j\ge 0$,  the coefficient $h_j(u)$ is a linear function in $u_{1}, u_{2}$, possibly identically zero. 
Let  $h(t,u) = t^{\alpha-1}\widetilde h(t,u)$, where $\widetilde h(t,u) := \sum_{j\ge \alpha-1}h_j(u)t^{j-\alpha+1}$.

Note that the constant term in $\widetilde h(t,u) $ as a series in variable $t$ is: \\
 $h_{\alpha-1}(u) = c_1\alpha(p_{1}-u_{1})$ if $\alpha <\beta$, or \\
 $h_{\alpha-1}(u) = c_1\alpha(p_{1}-u_{1}) + c_2\beta (p_{2}-u_{2})$ if $\alpha =\beta$.

 It follows that either the line $L :=\{ u_{1} - p_{1}=0\}$, or the line $L :=\{ c_1\alpha(p_{1}-u_{1}) + c_2\beta (p_{2}-u_{2})=0\}$, respectively, 
  is included in $\Delta_p^{\sing}$.  Note that this line is the normal at $p$ to the tangent cone of the branch $\Gamma$ in the coordinate system that we have set in the beginning.  This proves the point (a) of our statement.\\
  
\noindent
(b).  Let us fix now a point $\hat{u}$ on a line $L\in \NCone_{p}X$. In order to compute how many Morse points abut to $p$ along $\Gamma$ when approaching $\hat{u}$, let us consider a small disk $B\in \bC $ centred at the origin and a continuous path  $u: B \to \bC^{2}$ such that $u(0) = \hat{u}$, and that  $h_{\alpha-1}(u(s)) \not= 0$ for all $s\not=0$.

Let  $B\in \bC $ denote a small disk centred at the origin, and let $u: B \to \bC^{2}$ be some continuous path such that $u(0) = \hat{u}$, and that  $h_k(u(s)) \not= 0$ for all $s\not=0$.

 Replacing $u$ by $u(s)$ yields:
$$\widetilde h(t,u(s)) =  \sum_{j\ge \alpha-1}h_j(u(s))t^{j-\alpha +1}.$$
The number of Morse points which abut to $p$ is then the number of solutions in variable $t$ of the equation 
$\widetilde h(t,u(s))=0$ which converge to 0 when $s\to 0$.
This is precisely equal to $\ord_{t}\sum_{j\ge \alpha}h_j(\hat{u})t^{j-\alpha+1}$. In particular, this number is independent of the choice of the path $u(s)$.
\end{proof}

\begin{remark} 
Note that case (b) of Theorem \ref{t:morseaffine} avoids the case that several branches meet at $p \in \Sing X$. If the point $\hat{u}$ lies on a single normal line at $p$ the formula still applies. If $\hat{u} = p$ then can use the contributions for each branch separately, with a correction related to the branching.  
In case one of the branches is smooth it can occur that $m_{p, \Gamma}(u) =  0$  (compare Remark \ref{r:cusptypes}).

\end{remark}

\begin{remark}\label{r:exceptionalpoint}
 There is a generic Morse number  for all $u\in L$, except possibly a unique exceptional point $\tilde u$ of $L$ for which the number is higher, or $D_{\tilde u}$ has a non-isolated singularity.  
In Example \ref{ss:cusp}, we have $X:= \{x^{3}=y^{2}\}$, and then $\Delta^{\sing} = \NCone_{(0,0)}X$ is the axis $u_{1}=0$. The generic Morse number 
$m_{(0,0), X}(u)$ is 1, and the exceptional point  of this line is $\tilde u=(0,0)$, where the Morse number is 2.
The exceptional point on $\Delta_p^{\sing}$ is called \textit{focal} point in analogy to the the regular case.
\end{remark}

\begin{remark}\label{r:cusptypes}
We investigate next the behavior along $\Delta_p^{\sing} = \{u_1=0\}$ in more detail in case that the tangent line is non-isotropic. For convenience we assume $p = (0,0)$. We will use now the parametrization:\\
  $x(t) :=x(\gamma(t)) =  t^{\alpha}  \mbox{ and } y(t) :=y(\gamma(t))=  c t^{\beta} +\hot,$
  with $\alpha < \beta$.\\
so: $D_u(t)= (u_1-t^{\alpha})^2 + (u_2 -c \, t^{\beta} +\hot )^2$, ($c \ne 0$). 
It follows that
$$ h(t,u) = - \alpha u_1 t^{\alpha -1}+  \alpha t^{2\alpha -1} - c \beta u_2 t^{\beta-1} + \hot $$
We restrict to $u_1=0$ and get:
$$ h(t,u) = \alpha t^{2\alpha -1} - c \beta u_2 t^{\beta-1} + \hot $$
We distinguish 3 cases (illustrated in Figure \ref{fig:3cusps}):
\begin{itemize}
\item[$\beta > 2\alpha$]:  $t^{2\alpha -1 }$ is dominant; $m_{p, \Gamma}(u)= \alpha$ for all $u \in  L$. There is no finite  focal point;
\item[$\beta < 2\alpha$]: $-u_2 t^{2\beta -1 }$ is dominant. If $u_2 \ne 0$:  $m_{p, \Gamma}(u)= \beta -\alpha$ and if  $\tilde{u}_2 = 0$ then $m_{p, \Gamma}(\tilde{u})= \alpha$. There is exactly one focal point, the point $p \in \Sing X$;
\item[$\beta=2\alpha$]: $\alpha (1- 2 cu_2)t^{2\alpha -1 }$ is dominant. If $2 c  u_2\ne 1$ then $m_{p, \Gamma}(u)= \alpha $. So the focal point is $(0, 1/2c)$. For the focal point  $m_{p, \Gamma}(\tilde{u})> \alpha$ or perhaps infinity.  With generic higher order terms $m_{p, \Gamma}(\tilde{u}) = \alpha+ 1$.
\end{itemize}
If follows from this that  $m_{p, \Gamma}(u) \ge 1$  and   $m_{p, \Gamma}(u) = 1$  happens only in thecase: $ \beta = \alpha +1$. If  $m_{p, \Gamma}(u)= 0$ the the branch $\Gamma$ must be smooth in $p$.
%%%%%%

\iffalse
\begin{color}{blue}
We restrict to $u_1=0$:
$D_u(t) = u_2^2 + 2 u_2(c\, t^{\beta} +\hot) + t^{2 \alpha} + c\, t^{\beta} + O(\beta+1))^2$.\\
We distinguish 3 cases:(illustrated in Figure \ref{fig:3cusps}):
\begin{itemize}
\item[$\beta > 2\alpha$]:  $t^{2\alpha}$ is dominant; $m_{p, \Gamma}(u)= 2\alpha-1$ for all $u \in  L$. There is no finite  focal point;
\item[$\beta < 2\alpha$]: $-2u_2 t^{\beta}$ is dominant. If $u_2 \ne 0$:  $m_{p, \Gamma}(u)= \beta -1$ and if  $\tilde{u}_2 = 0$ then $m_{p, \Gamma}(\tilde{u})= 2\alpha-1$. There is exactly one focal point, the point $p \in \Sing X$;
\item[$\beta=2\alpha$]: $(1- 2 cu_2)t^{2\alpha}$ is dominant. If $c \ne 2u_2$ then $m_{p, \Gamma}(u)= 2\alpha -1$. So the focal point is $(0, 1/2c)$. For the focal point  $m_{p, \Gamma}(\tilde{u})> 2\alpha-1$ or perhaps infinity.  With generic higher order terms $m_{p, \Gamma}(\tilde{u}) = 2\alpha-2$.
\end{itemize}
If follows from this that  $m_{p, \Gamma}(u) \ge 2$  and   $m_{p, \Gamma}(u) = 2$  happens only in the cusp case: $\alpha =2 ; \beta = 3$. If  $m_{p, \Gamma}(u)= 1$ the the branch $\Gamma$ must be smooth in $p$.
\end{color}
\fi

\begin{figure}[hbt!]\label{fig:3cusps}
 \includegraphics[width=120mm]{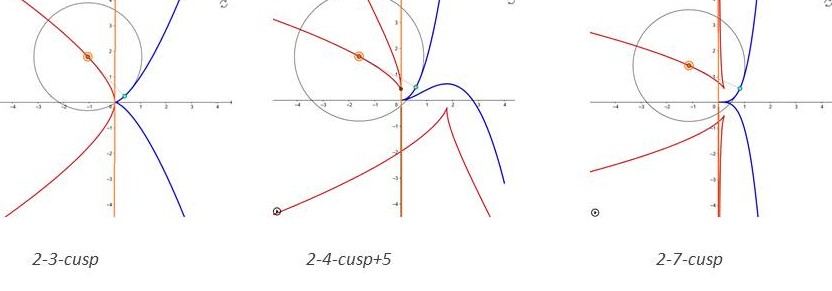}
 \caption{Some real pictures of cusps, together with discriminants. The first and the third picture are genuine cups, the second has  generic terms of degree 5. In all 3 cases the vertical line $\Delta_p ^{\sing}$ is the normal at $p$.  The focal sets $\Delta^{\focal}$ behave different: the 2-3-cusp intersects  $\Delta_p ^{\sing}$  in $p$, the 2-7 cusp goes to infinity. The 2-4-cusp+5 has a finite intersection with $\Delta_p^{\sing}$. Note that the other cusps on the focal set are not related to $p$, but to `extrema of curvature' of $X$ outside $p$. }
\end{figure}

An easy oberservation shows that the focal points belong to the closure of the focal locus $\Delta^{\focal}$  which is defined in the next subsection.  It also follows that $\Delta^{\sing}$ is not contained in $\Delta_{\ED}$. For a non-isotropic tangent we have:

\begin{corollary}
The discriminant $\Delta^{\focal}$ intersects $\Delta_p^{\sing}$  in the focal point if $\beta \le 2 \alpha$ and `at infinity' if $\beta > 2 \alpha$.
\end{corollary}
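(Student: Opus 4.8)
The plan is to realize the focal component $\Delta^{\focal}$ issued from the branch $\Gamma$ as the image of the \emph{center of curvature map}, and then to compute the limit of this map as the parameter $t\to0$, i.e. as $\gamma(t)$ tends to $p$. A point $u$ lies on $\Delta^{\focal}$ exactly when $D_u$ has a degenerate critical point at some $\gamma(t)\in X_{\reg}$; writing $D_u(\gamma(t))=(x(t)-u_1)^2+(y(t)-u_2)^2$ and using $'$ for $d/dt$, this means that both the first and the second $t$-derivatives vanish. These two equations form a linear system in $(u_1,u_2)$ with determinant $x'y''-y'x''$, and Cramer's rule gives the focal map
\begin{equation*}
 u_1(t)=x-\frac{(x'^2+y'^2)\,y'}{x'y''-y'x''},\qquad u_2(t)=y+\frac{(x'^2+y'^2)\,x'}{x'y''-y'x''}.
\end{equation*}
This is well defined because $x'y''-y'x''\not\equiv0$ (the branch $\Gamma$ is not a line), and by construction its image is the piece of $\Delta^{\focal}$ traced out by $\Gamma$, which is the birational evolute parametrization recalled in the introduction.

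First I would insert the parametrization of Remark \ref{r:cusptypes}, namely $x(t)=t^{\alpha}$ and $y(t)=ct^{\beta}+\hot$ with $\alpha<\beta$ and $c\neq0$, and read off the leading orders
\begin{equation*}
 x'^2+y'^2=\alpha^2t^{2\alpha-2}+\hot,\qquad x'y''-y'x''=c\,\alpha\beta(\beta-\alpha)\,t^{\alpha+\beta-3}+\hot.
\end{equation*}
Substituting into the focal map and collecting leading terms then yields
\begin{equation*}
 u_1(t)=\frac{\beta-2\alpha}{\beta-\alpha}\,t^{\alpha}+\hot,\qquad u_2(t)=c\,t^{\beta}+\frac{\alpha^2}{c\,\beta(\beta-\alpha)}\,t^{2\alpha-\beta}+\hot.
\end{equation*}
In particular $u_1(t)\to0$ as $t\to0$, so the branch of $\Delta^{\focal}$ issued from $\Gamma$ always tends toward the normal line $\Delta_p^{\sing}=\{u_1=0\}$; only the behavior of $u_2(t)$ decides whether the limit point is finite.

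The conclusion then reads off from the sign of the exponent $2\alpha-\beta$ of the dominant term of $u_2(t)$. If $\beta<2\alpha$ then $t^{2\alpha-\beta}\to0$ and $u_2(t)\to0$, so the focal branch limits to $p=(0,0)$; if $\beta=2\alpha$ the exponent vanishes and $u_2(t)\to \alpha^2/(c\beta(\beta-\alpha))=1/2c$, so it limits to $(0,1/2c)$. In both cases $\beta\le2\alpha$ the limit is a finite point of $\Delta_p^{\sing}$, and it coincides with the focal point identified in Remark \ref{r:cusptypes}. If instead $\beta>2\alpha$, then $t^{2\alpha-\beta}\to\infty$, hence $u_2(t)\to\infty$ while $u_1(t)\to0$, so the focal branch escapes to infinity asymptotically to $\Delta_p^{\sing}$, and the projective closures of $\Delta^{\focal}$ and $\Delta_p^{\sing}$ meet at the point at infinity of that line. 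This is exactly the stated dichotomy.

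I expect the main obstacle to be not the leading-order bookkeeping but the control of the higher-order terms, especially in the borderline case $\beta=2\alpha$, where I must be sure that the corrections to the numerator and denominator of the fraction contribute only strictly positive extra powers of $t$, so that the limit equals precisely $1/2c$ and is not shifted by the tail of the Puiseux expansion of $y$. This follows once one observes that the next exponent occurring in $y$ exceeds $\beta$, hence every correction to $u_2(t)$ carries an extra factor $t^{>0}$; a parallel check secures the claim $u_1(t)\to0$ in all three regimes.
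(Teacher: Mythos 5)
Your proposal is correct, and it reaches the stated dichotomy by a genuinely different route from the paper. The paper works entirely on the line $\Delta_p^{\sing}=\{u_1=0\}$: it expands $h(t,u)$ restricted to $u_1=0$, identifies the focal point as the unique $u_2$ where the dominant term $\alpha t^{2\alpha-1}-c\beta u_2t^{\beta-1}+\hot$ degenerates (so the Morse number $m_{p,\Gamma}$ jumps), and then invokes the unproved ``easy observation'' that such focal points lie in the closure of $\Delta^{\focal}$. You instead parametrize the relevant branch of $\Delta^{\focal}$ by the center-of-curvature map $u_1=x-y'I/D$, $u_2=y+x'I/D$ (the same formulas the paper derives in Corollary \ref{c:reg}) and compute its asymptotics as $t\to0$; your leading-order expressions
$u_1(t)=\tfrac{\beta-2\alpha}{\beta-\alpha}t^{\alpha}+\hot$ and $u_2(t)=\tfrac{\alpha^2}{c\beta(\beta-\alpha)}t^{2\alpha-\beta}+\hot$ are correct, and the resulting limits $(0,0)$ for $\beta<2\alpha$, $(0,1/2c)$ for $\beta=2\alpha$, and the vertical asymptote $u_1\to0$, $u_2\to\infty$ for $\beta>2\alpha$ match the focal points found in Remark \ref{r:cusptypes}. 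What your approach buys is that it actually \emph{proves} the closure statement the paper leaves as an observation, and it exhibits the asymptotic direction explicitly in the $\beta>2\alpha$ case; what it costs is that it does not by itself show that these limit points are where the Morse number along $\Delta_p^{\sing}$ jumps --- for that one still needs the paper's expansion of $h$ on $\{u_1=0\}$. Your closing remarks on controlling the higher-order terms (each correction carrying a strictly positive extra power of $t$ because the next Puiseux exponent of $y$ exceeds $\beta$) are exactly the right check, in particular in the borderline case $\beta=2\alpha$.
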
 These intersections is related  to the local study near the point $p$. Other intersections $L \cap \Delta^{\reg}$, which come from two different points on the curve  may occur (see Example \ref{e:isotropic}).
A similar computation can be done in case of an isotropic tangent line in $p$  with the conclusion that $\Delta^{\focal}$ intersects $\Delta_p^{\sing}$ in $p \in X$. See Example \ref{e:isotropic}, where isotropic coordinates are used. 
\end{remark}

 %%%%%%%%%%%%%%%%%%%%
 \section{The regular discriminant $\Delta^{\reg}$, and Morse attractors on $X_{\reg}$.}\label{s:reg}

 \subsection{A special role for flex points}
%%%%%%%%%%%%%%
The study of $D_u$ on the regular part of X over the complex numbers resembles the real theory of \emph{focal sets}. In the classical real setting, this is known as an \emph{evolute}, or a \emph{caustic}, see e.g. \cite{Mi}, \cite{Tr},\cite{CT}. 
But in the complex case we will find new components at certain flex points.
We  define $\Delta^{\reg}$ as the closure of the subset of points $u\in\Delta_{\ED}(X)$ such that $D_u$ has a degenerate critical point on $X_{\reg}$. Such a singularity is an attractor for at least 2 Morse points of some generic small deformation of $D_u$.

Let $(x(t),y(t))$ be a local parametrization of $X$ at some point $p := (x(t_0),y(t_0))\in X_{\reg}$.
\begin{definition}\label{d:flex}
 A point $p= (x(t_0),y(t_0))\in X_{\reg}$ is called a \emph{flex point} if $x'(t_0)y''(t_0)-y'(t_0)x''(t_0) = 0$. 
 We say that the tangent line to $X_{\reg}$ at $p$ is \emph{isotropic} if it verifies the condition $(x'(t_0))^2+(y'(t_0))^2=0$.
 
 These two definitions do not depend on the chosen parametrization.
\end{definition}

\begin{theorem}\label{t:reg}
Let $p=(p_1,p_2)\in X_{\reg}$ and $(x(t),y(t))$ be a local parametrization for $X$ at $p$.  
\begin{enumerate}
\item  If $p$ is not a flex point,  then there exists a unique $u\in\bC^2$ such that $p$ is an attractor for $D_u$.
\item  If $p$ is a flex point, and  if the tangent to $X$ at $p$ is not isotropic, then $p$ is not an attractor for $D_u$, $\forall u\in\Delta_{\ED}(X)$.

\item  If $p$ is a flex point,  and  if the tangent to $X$ at $p$ is isotropic,  then $p$ is an attractor for $D_u$ for every point $u$ on the line tangent to $X$ at $p$.
\end{enumerate}
\end{theorem}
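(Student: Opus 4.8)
The plan is to analyze the local behavior of the distance function $D_u$ at the point $p$ by working directly with the parametrization $(x(t),y(t))$ and the equation defining critical points. Following the setup used in the proofs of Theorem \ref{t:atyp} and Theorem \ref{t:morseaffine}, I would define the local function
$$h(t,u) := x'(t)\bigl(x(t)-u_1\bigr) + y'(t)\bigl(y(t)-u_2\bigr),$$
whose vanishing characterizes the pairs $((x(t),y(t)),u)\in\cE_X$. A point $p=(x(t_0),y(t_0))$ is an attractor for $D_u$ precisely when $t_0$ is a multiple root of $h(\cdot,u)=0$, i.e. when both $h(t_0,u)=0$ and $\partial_t h(t_0,u)=0$. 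Since this is a local statement, I would expand around $t_0$ (WLOG $t_0=0$) and translate so $p$ is the origin, reducing everything to the low-order coefficients of the Taylor expansion of $h$ in $t$.

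First I would write out $h(t,u)$ and its $t$-derivative at $t=0$. The condition $h(0,u)=0$ is the linear equation $x'(0)(x(0)-u_1)+y'(0)(y(0)-u_2)=0$, which says $u$ lies on the normal line $L_p$ to $X$ at $p$; this single linear constraint is always solvable for $u$. The second condition $\partial_t h(0,u)=0$ expands to
$$\bigl((x'(0))^2+(y'(0))^2\bigr) + x''(0)\bigl(x(0)-u_1\bigr)+y''(0)\bigl(y(0)-u_2\bigr)=0.$$
The plan is to read off the three cases from the structure of this pair of linear equations in $u$. For part (a), when $p$ is not a flex, the coefficient vectors $(x'(0),y'(0))$ and $(x''(0),y''(0))$ of the two linear forms are linearly independent (this is exactly the non-flex condition $x'y''-y'x''\neq 0$), so the two linear equations have a unique common solution $u$; hence there is a unique $u$ making $p$ a double root, i.e. an attractor.

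For parts (b) and (c), $p$ is a flex, so the two coefficient vectors are parallel and the linear system is degenerate. Here the isotropy condition $(x'(0))^2+(y'(0))^2$ enters through the inhomogeneous term of the second equation. When the tangent is non-isotropic, $(x'(0))^2+(y'(0))^2\neq 0$, and the two equations become inconsistent (the second one, restricted to the normal line $L_p$ cut out by the first, reduces to a nonzero constant), so no $u$ makes $p$ a degenerate critical point of $D_u$; this gives (b). When the tangent is isotropic, the term $(x'(0))^2+(y'(0))^2$ vanishes, the second equation becomes a scalar multiple of the first on $L_p$, and every $u$ on the tangent line itself satisfies both; the plan is to verify that the tangent line is exactly the locus where the degeneracy persists, giving (c). The main obstacle I anticipate is the bookkeeping in case (c): I must confirm that it is precisely the \emph{tangent} line (not merely \emph{some} subset of the normal line) on which $p$ becomes an attractor, which requires carefully checking that, under isotropy, the normal direction and the tangent direction coincide (an isotropic line is normal to itself, by the terminology in \S\ref{e:2ex}), so the two linear conditions collapse onto the tangent line; I would also want to confirm that on this line at least two Morse points genuinely collide, rather than the degeneracy being spurious, which may need one more order in the $t$-expansion.
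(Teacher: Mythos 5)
Your proposal is correct and follows essentially the same route as the paper: expand $h(t,u)=x'(t)(x(t)-u_1)+y'(t)(y(t)-u_2)$ at $t_0$, characterize attractors by the vanishing of the first two Taylor coefficients, and read the three cases off the resulting $2\times 2$ linear system in $u$, whose determinant is the flex condition and whose inhomogeneous term is the isotropy form $(x'(t_0))^2+(y'(t_0))^2$. The final point you flag in case (c) is handled in the paper exactly as you anticipate: when the tangent is isotropic the solution set is the normal line at $p$, which coincides with the tangent line because an isotropic line is normal to itself.
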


\begin{proof}
We recall that:
$$h(t,u) :=   x'(t)\bigl( x(t) -u_{1}\bigr) + y'(t)\bigl( y(t) -u_{2}\bigr). $$

Let $t_0$ be the point in the domain of  the parametrization $(x(t),y(t))$ such that $(x(t_0),y(t_0))=p$.
Without less of generality we can assum $t_o=0$.
The Taylor series at $t_0$ are:
\[  
\begin{cases}
 x(t)= p_1+x'(t_0)(t-t_0)+ \frac{x''(t_0)}{2} \cdot (t-t_0)^2 +\hot\\
y(t)= p_2+y'(t_0)(t)+ \frac{y''(t_0)}{2} \cdot (t-t_0)^2 +\hot
\end{cases}
\]
and therefore:
\begin{align*}
h(t,u)=&\bigl[ x'(t_0)(p_1-u_1)+y'(t_0)(p_2-u_2)\bigr] +
\\
&\bigl[ (x'(t_0))^2+x''(t_0)(p_1-u_1)+(y'(t_0))^2+y''(t_0)(p_2-u_2)\bigr] (t-t_0)+ \hot
\end{align*}

 We write (again)  $h(t,u) = h_0(u) + h_1(u) t + h_2(u) t^2 + \hot$
	
	In order to determine the multiplicities of zero (i.e. the singularity type of $D_u$), we consider the equations $h_k(u)=0$, rewritten as a system of linear equations:
\[  
\begin{cases}
x'(t_0)(p_1-u_1)+y'(t_0)(p_2-u_2)=0\\
 x''(t_0)(p_1-u_1)+y''(t_0)(p_2-u_2)=-(x'(t_0))^2-(y'(t_0))^2 \\
\end{cases}
\]

The point $p\in X_{\reg}$ is an attractor (for at least 2 Morse points) if and only if  $\ord_{t_{0}}h(t,u) >1$, thus if and only if $u=(u_{1}, u_{2})$ is a solution of the
linear system.	
If its determinant $D = x'(t_0)y''(t_0)-y'(t_0)x''(t_0)$ is not 0, then the system  
has a unique solution\footnote{This corresponds in the real geometry to the familiar fact that for every non-flex point there is a unique focal centre on the normal line to $X$ through $p$.}. If $D =0$, then the system has solutions if and only if $(x'(t_0))^2+(y'(t_0))^2=0$, and in this case the set of solutions is the line passing through $p$, normal to $X$, and thus tangent to $X$ because it is isotropic.
\end{proof}

\subsection{Structure of $\Delta^{\reg}$}\label{ss:reg}\ \\
 As we have seen, $\Delta^{\reg}$ may have line components due to Theorem \ref{t:reg}(b), see also Example \ref{ex:regatyp}.  We also have:
\begin{corollary}\label{c:reg}
 If $X$ is irreducible and is not a line, then $\Delta^{\reg}$ has a unique component which is not a line.
 \end{corollary}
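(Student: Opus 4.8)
The plan is to understand precisely which components of $\Delta^{\reg}$ can be lines and which cannot, and to show that all the non-line contributions to $\Delta^{\reg}$ coincide (birationally) as a single component. Recall from the Introduction that $\Delta^{\reg} = \Delta^{\focal} \cup \Delta^{\iflex}$, where $\Delta^{\iflex}$ arises from the flex points with isotropic tangent treated in Theorem \ref{t:reg}(c), and these are lines by construction. So the claim reduces to showing that $\Delta^{\focal}$ is itself a single irreducible component which is not a line.

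First I would invoke Theorem \ref{t:reg}(a): for every non-flex point $p \in X_{\reg}$, there is a \emph{unique} $u(p) \in \bC^2$ for which $p$ is an attractor, namely the solution of the $2\times 2$ linear system appearing in the proof. This defines a rational map $\varphi \colon X \dashrightarrow \bC^2$, $p \mapsto u(p)$, whose image is (the closure of) $\Delta^{\focal}$. Since $X$ is irreducible, its image $\overline{\varphi(X)}$ is irreducible, hence $\Delta^{\focal}$ is a single irreducible component. The key point is then to verify that $\varphi$ is \emph{birational} onto its image, as announced in the Introduction: this follows because the construction recovers $p$ from $u(p)$ generically — the generic fibre of $\pi_2 \colon \cE_X \to \bC^2$ over a generic point of $\Delta^{\focal}$ contains the degenerate critical point $p$, and the focal/evolute correspondence is generically one-to-one. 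Birationality gives $\dim \overline{\varphi(X)} = \dim X = 1$, so $\Delta^{\focal}$ is an irreducible curve.

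Next I would rule out that $\Delta^{\focal}$ is a line. The natural approach is by contradiction combined with the classification already available: if $\Delta^{\focal}$ were a line, then the focal map $\varphi$ would parametrize a line birationally, which forces strong constraints on the curvature of $X$. Concretely, the explicit solution of the linear system expresses $u(p)$ in terms of $(x(t), y(t))$ and their first and second derivatives, and demanding that this lies on a fixed line for all $t$ yields a differential condition on the parametrization. One then shows this condition can only hold when $X$ is itself a line (excluded by hypothesis) or an isotropic line; in either case the determinant $D = x'y'' - y'x''$ vanishes identically and the non-flex hypothesis underlying Theorem \ref{t:reg}(a) fails, contradicting the fact that $X$ is not a line.

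The main obstacle I anticipate is the birationality of the focal map $\varphi$, which is where the ``unique component'' assertion really has content: irreducibility of the image is automatic from irreducibility of $X$, but I must exclude the possibility that $\Delta^{\focal}$ degenerates to lower dimension or splits off extra line components from special loci (flex points, singular points, or points whose focal centre escapes to infinity). These special loci are finite on an irreducible $X$, so they contribute only finitely many points to $\overline{\varphi(X)}$ and cannot create additional one-dimensional non-line components; the line components they \emph{do} produce are exactly $\Delta^{\iflex}$ and the lines of $\Delta^{\sing}$, already accounted for separately. Thus the only non-line component of $\Delta^{\reg}$ is $\overline{\varphi(X)} = \Delta^{\focal}$, and it is unique, which is the assertion of the corollary.
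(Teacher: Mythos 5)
Your overall strategy is the same as the paper's: reduce to showing that $\Delta^{\focal}$, the closure of the image of the focal map $\varphi\colon p\mapsto u(p)$ defined on the (dense) set of non-flex points, is a single irreducible component that is not a line, with irreducibility of the image coming for free from irreducibility of $X$. However, the decisive step --- showing that this component is \emph{not a line} --- is exactly the point where you write ``one then shows this condition can only hold when $X$ is itself a line \dots'' without actually showing it. That verification is the mathematical content of the corollary, and it is not a routine deferral: the paper does it by differentiating the first equation of the linear system $x'(t)(x(t)-u_1)+y'(t)(y(t)-u_2)=0$ along the focal parametrization $u(t)$ and substituting the second equation, which yields the clean orthogonality relation $x'(t)u_1'(t)+y'(t)u_2'(t)=0$; hence $u_1'/u_2'$ is constant (the image is a line) if and only if $x'/y'$ is constant, i.e.\ $X$ is a line. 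Your sketch gestures at ``strong constraints on the curvature'' but never produces this relation, and the dichotomy you announce (``$X$ is a line or an isotropic line'') is both redundant and unargued. As written, the proof is incomplete at its central point.

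A second, smaller issue: you lean on birationality of $\varphi$ onto its image to get $\dim\overline{\varphi(X)}=1$. Birationality is much more than you need (non-constancy of $\varphi$ suffices for the dimension count), you do not prove it (``the focal/evolute correspondence is generically one-to-one'' is an assertion, not an argument), and it is in fact false in the degenerate case $X=\{(x-a)^2+(y-b)^2=\alpha\}$, where all normal lines pass through $(a,b)$ and $\varphi$ is constant, so that $\Delta^{\focal}$ collapses to a point. (The paper's own proof is also silent about this case: the criterion ``$u_1'/u_2'$ constant'' is vacuous when $u'\equiv 0$.) You should either drop the birationality claim and argue non-constancy directly, or explicitly flag the complex-circle case as excluded; importing an unproved and generically false intermediate claim weakens an otherwise correctly structured argument.
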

\begin{proof}
 Since the non-flex points are dense in $X$ whenever $X$ is not a line, let  $p\in X_{\reg}$ be a non-flex point,  and we consider a local parametrization $(x(t),y(t))$ at $p:= (x(0),y(0))$. Then the system:
\[
\begin{cases}
x'(t)(x(t)-u_1)+y'(t)(y(t)-u_2)=0\\
 x''(t)(x(t)-u_1)+y''(t)(y(t)-u_2)=-(x'(t))^2-(y'(t))^2
\end{cases}
\]
 has a unique solution $u(t)=(u_{1}(t), u_{2}(t))$ for $t$ close enough to 0. We therefore obtain 
a parametrization for the germ of $\Delta^{\reg}$ at $\tilde u = u(0)$, exactly like in the classical real setting (\cite{PRS}):
 We use the notations $I(t) = (x'(t))^2+(y'(t))^2$ and $D(t) = x'(t)y''(t)-y'(t)x''(t)$:
\iffalse
\[\begin{cases}
u_1(t)=x(t)-\frac{y'(t)((x'(t))^2+(y'(t))^2)}{x'(t)y''(t)-y'(t)x''(t)}\\
u_2(t)=y(t)+\frac{x'(t)((x'(t))^2+(y'(t))^2)}{x'(t)y''(t)-y'(t)x''(t)}.
\end{cases}\]
\fi
%
 \[\begin{cases}
u_1(t)=x(t)- y'(t) \frac{I(t)}{D(t)}\\
u_2(t)=y(t)+x'(t) \frac{I(t)}{D(t)}
\end{cases}\]

This germ of $\Delta^{\reg}$ at $p$ cannot be a line. Indeed, by taking the derivative with respect to $t$ of the first equation of the system, we get:
$$ x''(t)(x(t)-u_1(t))+(x'(t))^2-x'(t)u'_1(t)+y''(t)(y(t)-u_2(t))+(y'(t))^2-y'(t)u'_2(t)=0.$$
By using the second equation of the system we deduce: 
$$x'(t)u'_1(t)+y'(t)u'_2(t)=0.$$
The germ $\Delta^{\reg}$ at $u(0)$ is a line if and only if $u'_1(t)/u'_2(t)$ is constant for all $t$ close enough to 0, which by the above equation is equivalent to $x'(t)/y'(t)=$ const. This implies that $X$ is a line at $p$, thus it is an affine line, contradicting our assumption.
\end{proof}

\begin{remark}
The linear components are isotropic tangent lines at flex points with isotropic tangents (as mentioned in (c) of Theorem \ref{t:reg}). They will be denoted by $\Delta^{\iflex}_p \subset \Delta^{\reg}$. The unique component, which is not a line, mentioned in Corollary \ref{c:reg} is the complex version of the focal set and will be denoted by  $\Delta^{focal}$ .
 We have:
 \[  \Delta^{\reg}        =   \Delta^{\focal}    \cup  \bigcup_{p}   \Delta^{\iflex}     \]
\end{remark}
\begin{remark}
 Next we will discuss the limit points of $\Delta^{\focal}$ if $t \to t_0$ for flex points $p \in X_{\reg}$.  
 We will usue the above evolute formula and  follow the cases, mentioned in Theorem \ref{t:reg}: 
\begin{itemize}
\item[(a)] non-flex point: $u(t)$ is continuous and the limit $u(t_o)$ is finite.
\item[(b)] flex-point with non-isotropic tangent: ($D(t_0)= 0 $ and $I(t_0) \ne 0$): $u(t) \to \infty$ asymptotic to the normal at $p$.
\item[(c)] flex-point with isotropic tangent: ($D(t_0)= 0 $ and $I(t_0) = 0$):  the limit of $u(t) \to p$ is $p \in X$.

\end{itemize}
(c) needs some explanation, since the quotient $I(t) / D(t)$ could become undetermined. We mention first two technical lemma's; the first is obvious.

\begin{lemma}
Suppose that $w:D\to\bC$ is a holomorphic function defined on a neighborhood $D$ of $0\in\bC$. If $u(0)=0$ then $\frac{w}{w'}$ is holomorphic on a neighborhood $D'\subset D$ of $0$
and $\frac{w}{w'}(0)=0$.
\end{lemma}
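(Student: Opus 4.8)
The statement to prove is the \emph{obvious} Lemma: if $w:D\to\bC$ is holomorphic near $0$ with $w(0)=0$, then $w/w'$ is holomorphic near $0$ and vanishes at $0$.

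\medskip

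The plan is to reduce everything to the order of vanishing of $w$ at the origin. First I would dispose of the degenerate case $w\equiv 0$, where the quotient is identically zero and there is nothing to prove. So assume $w\not\equiv 0$; then since $w(0)=0$, we may write $w(t)=t^{m}g(t)$ for a unique integer $m\ge 1$ and a holomorphic germ $g$ with $g(0)\neq 0$, by the standard factorization of a holomorphic function at a zero of finite order. The whole point is that differentiation lowers the order of vanishing by exactly one.

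\medskip

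The key computation is then to differentiate this factorization: $w'(t)=m\,t^{m-1}g(t)+t^{m}g'(t)=t^{m-1}\bigl(m\,g(t)+t\,g'(t)\bigr)$. Setting $G(t):=m\,g(t)+t\,g'(t)$, which is holomorphic, we have $G(0)=m\,g(0)\neq 0$ since $m\ge 1$ and $g(0)\neq 0$. Hence $1/G$ is holomorphic on a possibly smaller disk $D'\subset D$. Forming the quotient, the factors $t^{m-1}$ cancel and I obtain
\begin{equation*}
 \frac{w(t)}{w'(t)}=\frac{t^{m}g(t)}{t^{m-1}G(t)}=\frac{t\,g(t)}{G(t)},
\end{equation*}
which is manifestly holomorphic on $D'$, being a product of the holomorphic function $t\,g(t)$ with the holomorphic unit $1/G$. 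Evaluating at $t=0$ gives $\frac{w}{w'}(0)=\frac{0\cdot g(0)}{G(0)}=0$, as claimed.

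\medskip

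There is essentially no main obstacle here, which is why the paper calls the statement obvious; the only points requiring a word of care are the reduction to finite order (ruling out $w\equiv 0$, so that $m$ is well defined) and the verification that the denominator $G$ is nonvanishing at the origin, which is exactly where the hypothesis $w(0)=0$ (forcing $m\ge 1$) is used. I would note in passing that the hypothesis $u(0)=0$ appearing in the statement is a typo for $w(0)=0$; the function named $u$ does not occur, and it is $w(0)=0$ that makes the argument run.
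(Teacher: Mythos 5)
Your proof is correct and is the standard order-of-vanishing argument; the paper itself offers no proof at all (it declares the lemma ``obvious''), so your write-up simply supplies the computation the authors had in mind, and you are also right that $u(0)=0$ in the statement is a typo for $w(0)=0$. The only quibble is your dismissal of the case $w\equiv 0$: there the quotient $w/w'$ is $0/0$ and hence undefined rather than ``identically zero,'' so strictly speaking that degenerate case must be excluded from the statement (as it implicitly is in the paper's application, where $w=f/g+g/f$ is differentiated and its zero set is assumed isolated), not proved.
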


\begin{lemma}\label{l:limit}
Suppose that $f,g:D\to\bC$ are holomorphic functions defined on a neighborhood $D$ of $0\in\bC$ such that $f(0)g(0)\neq 0$ and $f(0)^2+g(0)^2=0$. \\ Then 
$\frac{f^2+g^2}{f'g-g'f}$ is holomorphic on a neighborhood $D'\subset D$ of $0$
and $\frac{f^2+g^2}{f'g-g'f}(0)=0$.
\end{lemma}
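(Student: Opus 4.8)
The plan is to exploit the factorization $f^2+g^2=(f+ig)(f-ig)$ together with the fact that, under the stated hypotheses, exactly one of these two factors vanishes at the origin. First I would record that $f(0)g(0)\neq 0$ and $f(0)^2+g(0)^2=0$ force $g(0)=\pm i f(0)$; treating the two cases symmetrically (they are interchanged by $i\mapsto -i$), I assume $g(0)=if(0)$. Setting $\phi:=f+ig$ and $\psi:=f-ig$, a direct substitution gives $\phi(0)=f(0)+ig(0)=0$ while $\psi(0)=f(0)-ig(0)=2f(0)\neq 0$. Thus $\phi$ vanishes at $0$ and $\psi$ is a unit there; write $m:=\ord_0\phi\geq 1$ and $\phi=t^m\alpha$ with $\alpha(0)\neq 0$.

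The key step is to rewrite both the numerator and the denominator in terms of $\phi$ and $\psi$. The numerator is simply $f^2+g^2=\phi\psi$, which has $\ord_0=m$ because $\psi(0)\neq0$. For the denominator, a short linear computation (using $f=(\phi+\psi)/2$ and $g=-i(\phi-\psi)/2$) yields the identity
\[
 f'g-g'f=\tfrac{i}{2}\bigl(\phi'\psi-\phi\psi'\bigr).
\]
Since $\phi'=t^{m-1}(m\alpha+t\alpha')$, one sees that $\phi'\psi$ has order $m-1$ (with nonzero leading coefficient $m\alpha(0)\psi(0)$), whereas $\phi\psi'$ has order $\geq m$; hence the Wronskian-type expression $\phi'\psi-\phi\psi'$ has order exactly $m-1$, and therefore $\ord_0(f'g-g'f)=m-1$.

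Finally I would compare orders: the quotient $\frac{f^2+g^2}{f'g-g'f}$ has a zero of order $m$ in the numerator against a zero of order $m-1$ in the denominator, so it extends holomorphically across $0$ with $\ord_0=1$; in particular its value at $0$ is $0$, which is the assertion. Concretely, cancelling the common factor $t^{m-1}$ leaves $-2i\,t\alpha\psi/\bigl(m\alpha\psi+t(\cdots)\bigr)$, whose denominator is nonzero at $0$ and whose numerator carries a factor $t$.

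The main obstacle is purely the bookkeeping of the two identities — recognizing the factorization $f^2+g^2=\phi\psi$ and verifying that the Wronskian $f'g-g'f$ equals $\tfrac{i}{2}(\phi'\psi-\phi\psi')$; once these are in place the order count is immediate and forced by $m\geq 1$. No genuinely hard analysis is involved, and the hypothesis $f(0)g(0)\neq 0$ enters only to guarantee $\psi(0)\neq 0$, i.e.\ that precisely one of the two factors degenerates at the origin.
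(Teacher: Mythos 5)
Your proof is correct, but it takes a genuinely different route from the paper's. You factor $f^2+g^2=(f+ig)(f-ig)=\phi\psi$, observe that the hypotheses force exactly one factor (say $\phi$) to vanish at $0$ while the other is a unit, verify the Wronskian identity $f'g-g'f=\tfrac{i}{2}(\phi'\psi-\phi\psi')$, and then count orders: the numerator has order $m=\ord_0\phi$ and the denominator order $m-1$, so the quotient has a simple zero. The paper instead introduces the auxiliary function $w=\tfrac{f}{g}+\tfrac{g}{f}=\tfrac{f^2+g^2}{fg}$, which is holomorphic and vanishes at $0$ by the hypotheses, computes $w'=(f'g-g'f)\tfrac{f^2-g^2}{f^2g^2}$, invokes a preliminary lemma asserting that $w/w'$ is holomorphic with value $0$ at the origin, and finishes by noting that $\tfrac{w}{w'}=\tfrac{f^2+g^2}{f'g-g'f}\cdot\tfrac{fg}{f^2-g^2}$ with the last factor a unit since $f(0)^2-g(0)^2\neq 0$. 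At bottom both arguments reduce to the same fact --- a nonzero holomorphic function vanishing at $0$ divided by its derivative has a simple zero --- but your decomposition is more explicit and elementary (at the cost of a harmless case split on $g(0)=\pm i f(0)$), whereas the paper's choice of $w$ avoids that case split and relates numerator and denominator to $w$ and $w'$ in one stroke. One shared caveat: both arguments implicitly assume $f^2+g^2\not\equiv 0$ (in your notation $\phi\not\equiv 0$, so that $m$ is finite; in the paper's, $w\not\equiv 0$, so that $w/w'$ makes sense); in the degenerate case the quotient in the statement is $0/0$ identically, so this nondegeneracy is really part of the setup rather than a gap specific to either proof.
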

\begin{proof}
Let $w=\frac{f}{g}+\frac{g}{f}=\frac{f^2+g^2}{fg}$. As we assumed that $f(0)g(0)\neq 0$, it follows that $w$ is holomorphic on a neighborhood of the origin.

At the same time $w'=(f'g-g'f)\frac{f^2-g^2}{f^2g^2}$. By the previous Lemma $\frac{w}{w'}$ is holomorphic on a neighborhood of the origin and its value there is $0$.
We have that $\frac{w}{w'}=\frac{f^2+g^2}{f'g-g'f}\frac{fg}{f^2-g^2}$.
it remains to notice that because $f(0)g(0)\neq 0$ and $f(0)^2+g(0)^2=0$ we have that $f^2(0)-g^2(0)\neq 0$.
\end{proof}

To show (c)  we use Lemma \ref{l:limit} for $f=x'$ and $g=y'$. This implies  $I(t) / D(t)$ is holomorphic with limit $0$.

 \end{remark}
%%%%%%%%%%%%%%
\subsection{Morse numbers at attractors on $X_{\reg}$.}\label{ss:morsereg}
 Let $\hat{u}\in \Delta^{\reg}(X)$ and let $p\in \Sing {D_{\hat{u}}}_{|X}\cap X_{\reg}$. We call  \emph{Morse number at $p\in X_{\reg}$}, and denote it by $m_{p}$,  the number of Morse points which  abut to $p$ as $s\to 0$ in a Morse deformation $D_{u(s)}$ with $u(0)=\hat{u}$. 
 
 A point $p\in \Sing {D_{\hat{u}}}_{|X_{\reg}}$ is an \emph{attractor} if  $m_{p}\ge 2$, see \S\ref{ss:attract} point (3). An attractor is therefore  a singularity of ${D_{\hat{u}}}_{|X_{\reg}}$ at $p$ which is not Morse.

\begin{theorem}[Morse number at an attractor on $X_{\reg}$] \ \\
 The Morse number at $p\in \Sing {D_{\hat{u}}}_{|X_{\reg}}$ is:
 \[ m_{p}(u) = \mult_{p}\bigl( X, \{D_{u}= D_{u}(p)\}\bigr) -  1.
 \]
  \end{theorem}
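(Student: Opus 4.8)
The plan is to reduce the statement to a local intersection-multiplicity computation on the single branch of $X$ through $p$, using the same function $h(t,u)$ that has governed all the previous Morse-counting arguments. Recall from the proof of Theorem \ref{t:reg} that with a local parametrization $(x(t),y(t))$ of $X_{\reg}$ at $p=(x(t_0),y(t_0))$ we have
\[
h(t,u)=x'(t)\bigl(x(t)-u_1\bigr)+y'(t)\bigl(y(t)-u_2\bigr),
\]
and that $h(t,u)$ is, up to a nonvanishing unit, exactly $\tfrac{1}{2}\tfrac{\d}{\d t}D_u(t)$, where $D_u(t):=D_u(x(t),y(t))$ is the restriction of the distance function to the branch. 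The critical points of ${D_{\hat u}}_{|X_{\reg}}$ abutting to $p$ correspond to the zeros of $h(t,\hat u)$ in variable $t$ near $t_0$, and the Morse number $m_p$ is the number of such zeros splitting off under a generic deformation $u(s)$, i.e. $m_p=\ord_{t_0}h(t,\hat u)$.

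First I would normalize $t_0=0$ and translate so that $p$ is the origin and $D_{\hat u}(p)=0$; then the level set $\{D_{\hat u}=D_{\hat u}(p)\}$ is the affine conic $\{(x-\hat u_1)^2+(y-\hat u_2)^2 = \alpha\}$ through $p$ with $\alpha=D_{\hat u}(p)$. The key algebraic identity is that $D_{\hat u}(t)-D_{\hat u}(p)$ vanishes at $t=0$, so writing $g(t):=D_{\hat u}(t)-\alpha$ we have $\ord_0 g = \mult_p\bigl(X,\{D_{\hat u}=\alpha\}\bigr)$ by the standard characterization of intersection multiplicity of a curve with another curve along a parametrized branch (pull back the defining equation of the conic and take the $t$-order). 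Since $h(t,\hat u)=\tfrac12 g'(t)$ up to a unit and $g(0)=0$, differentiation drops the order by exactly one, giving
\[
m_p=\ord_0 h(t,\hat u)=\ord_0 g'(t)=\ord_0 g(t)-1=\mult_p\bigl(X,\{D_{\hat u}=D_{\hat u}(p)\}\bigr)-1,
\]
which is the claimed formula.

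I would then justify the two auxiliary facts used above. For the deformation-invariance of the count I would invoke exactly the argument already run in Theorem \ref{t:morseinfty} and Theorem \ref{t:morseaffine}(b): the number of roots in $t$ near $0$ of $h(t,u(s))=0$ that converge to $0$ as $s\to0$ equals $\ord_0 h(t,\hat u)$ and is independent of the path, by continuity of roots of the holomorphic family together with the fact that $h_0(\hat u),\dots$ controls the lowest-order term. For the intersection-multiplicity step I would note that because $p\in X_{\reg}$ the branch is smooth and the parametrization is injective and immersive, so pulling back any holomorphic defining equation of the conic and reading off $\ord_0$ computes the local intersection number honestly; the conic $\{D_{\hat u}=\alpha\}$ is smooth at $p$ (its gradient $2(p-\hat u)$ is the normal direction, nonzero since $\hat u\neq p$ off the focal set), so no correction for singular points of the conic is needed.

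The main obstacle I expect is the degenerate boundary case where $\hat u=p$, or more generally where the conic level set is tangent to $X$ at $p$ to infinite order (the non-isolated-singularity situation flagged in Remark \ref{r:ordinfty} and Remark \ref{r:exceptionalpoint}): there $\ord_0 g$ may jump or become infinite, and I would have to argue that either the formula still reads correctly as a finite multiplicity or the hypothesis $p\in\Sing{D_{\hat u}}_{|X_{\reg}}$ with $\hat u\in\Delta_{\ED}(X)$ excludes the identically-vanishing case exactly as in Remark \ref{r:ordinfty}. A secondary subtlety is confirming that $h$ differs from $\tfrac12 D_u'(t)$ by a genuine unit rather than a factor that could itself vanish at $t_0$ — but since $(x'(t_0),y'(t_0))\neq(0,0)$ on $X_{\reg}$, the identity $D_u'(t)=2h(t,u)$ is exact with no spurious factor, so this reduces to a direct check.
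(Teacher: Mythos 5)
The proposal is correct and follows essentially the same route as the paper: the Morse number equals the Milnor number of ${D_{\hat u}}_{|X}$ at the smooth point $p$, which for a curve germ is the multiplicity (i.e.\ the intersection multiplicity with the level conic) minus one. The only difference is that you derive the two ingredients explicitly via the order of $h(t,\hat u)=\tfrac12\frac{\d}{\d t}D_{\hat u}(t)$ along the parametrized branch, whereas the paper simply cites Brieskorn for ``Milnor number $=$ number of Morse points in a Morsification'' and the classical fact $\mu=\mult-1$ for curve germs.
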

  
\begin{proof}
This is a consequence of general classical results, as follows. The Milnor number of a holomorphic function germ  $f: (X,p)\to (\bC,0)$ with isolated singularity at a smooth point $p\in X_{\reg}$ is equal to the number of Morse points in some Morsification $f_{s}$ which abut to $p$ when $s\to 0$, cf Brieskorn \cite{Bri}, and see also \cite{Ti3} for a more general statement.

On the other hand the Milnor number of $f$ at $p\in X_{\reg}$,  in case $\dim_{p}X=1$, is equal to the multiplicity of $f$ at $p$ minus 1. In our case, the function $f$ is the restriction to $X$ of the Euclidean distance function $D_{\hat{u}}$, and therefore this multiplicity equals the intersection multiplicity $\mult_{p}\bigl( X, \{D_{\hat{u}}= D_{\hat{u}}(p)\}\bigr)$.
\end{proof}

%%%%%%%%%%%%%%%%%%%%%%%%%%%%%%%%%%%%
\section{Examples}\label{s:examples}

\subsection {Examples in complex coordinates}
\begin{example}[The ``complex circle'']\label{ex:morseind}\ 

Let $X := \{x^{2} + y^{2} = 1\}\subset \bC^{2}$, and $D_{u} := (x-u_{1})^{2} + (y-u_{2})^{2}$.\\
We have $X^{\ity}\cap Q^{\ity} = Q^{\ity} = \{[1;i],  [i;1]\}$, and $\EDdeg(X)= 2$.

A parametrization of the unique branch of $X$ at $[1;i]$, which we will denote by $X_{[1;i]}$,   is $\gamma: x= \frac{1+t^{2}}{2t}, \  y= \frac{(1-t^{2})i}{2t}$,  for $s\to 0$.  We get, in the notations of \S\ref{s:struct}:  $k=1$, $P(t) = \frac{1+t^{2}}{2}$, $P'(t) = t$, and $Q(t) = \frac{(1-t^{2})i}{2}$, $Q'(t) = it$. After all simplifications, we obtain:
\[ h(t,u) = (u_{1} +i u_{2})t + (-u_{1} +i u_{2})t^{3} \]
which yields $\widetilde{h}(t,u) = (u_{1} +i u_{2}) + (-u_{1} +i u_{2})t^{2}$.

This shows that $\Delta_{[1;i]}^{\atyp}(X) = \{ u_{1} +i u_{2} =0\}$, and that $m^{\gen}_{X_{[1;i]}} =2$ in the notations of Remark \ref{r:morseinfty}, which means that there are 2 Morse points which abut to $[1;i]$.
The exceptional point on the line is $(0,0)$, for which we get $\widetilde{h}(t,u) \equiv 0$, which means that $\dim \Sing D_{(0,0)} >0$, in other words $D_{(0,0)}$ has non-isolated singularities on $X$.

The study at the other point at infinity $[i;1]$  is similar. By the symmetry, we get:
 $\Delta_{[i;1]}^{\atyp}(X) = \{ u_{1} - i u_{2} =0\}$ and $m^{\gen}_{X_{[i;1]}} =2$, with the same exceptional point $(0,0)$.
 
		We get $\Delta^{\atyp}(X) = \Delta_{[1;i]}^{\atyp}(X)\cup \Delta_{[i;1]}^{\atyp}(X) = \{u_{1}^{2}+u_{2}^{2}=0\}$, and
we actually have:
$$\Delta_{\tED}(X)= \Delta_{\ED}(X)= \Delta^{\atyp}(X).$$
Note that for $u=(0,0)$ the function  $D_u$ has a non-isolated singularity: The singular set is $X$.
\end{example}

\

\begin{example}[where the line component $\Delta_{\xi}^{\atyp}$ coincides with $\Delta_p^{\iflex}$]\label{ex:regatyp}\ \\
Let 
$$X=\{(x,y)\in\bC^2:xy^4=iy^5+y^3-3y^2+3y-1\}.$$ 
  We have $\EDdeg(X) = 10$.  We will first find $\Delta^{\atyp}$. Let us observe that 
$\bar X \cap H^\ity = \{[1;0], [i;1]\}$, and that $Q^{\ity}\cap H^\ity = \{[i;1]\}$. Thus in order to find $\Delta^{\atyp}$ we have to focus at the point $\xi =[i;1]$ only.
A local parametrization of $X$ at $\xi$, which is actually global, is given by: 
$$t\in\bC^*\to (x(t),y(t));\ \ \  x(t)=\frac{i+t^2-3t^3+3t^4-t^5}{t},\ \  y(t)=\frac 1t.$$
By our study of the structure of  $\Delta^{\atyp}$ in \S\ref{s:struct},  we get: $k=1$, $P(t)=i+t^2-3t^3+3t^4-t^5$ and $Q(t)=1$.
Thus: 
$$h(t,u)= (iu_1+u_2)t +   t^3(-3i-u_1)) +  t^4 (1+6i + 6 u_1) + t^5( -9 - 3i-9 u_1) + \hot$$
 and therefore $\Delta_{\xi}^{\atyp}$ is the line $L :=\{iu_1+u_2=0\}$.  By Theorem \ref{t:morseinfty} and Definition \ref{t:morseinfty}, the generic Morse number at infinity of is then $m^{\gen}_{[i;1]} = 3-1 = 2$. In the focal point $q =(-3i,-3)$ the Morse number at infinity is $m_{[i;1]}(q) = 4-1 = 3$. 
 
 \
 
We claim that the inclusion $\Delta_{\xi}^{\atyp}\subset \Delta^{\reg}$ holds. To prove it, we will use Theorem \ref{t:reg} at 
the point $p=(i,1)$ and the same global parametrization, thus at the value $t_0=1$.
We have:
$$x'(t)=-\frac{i}{t^2}+1-6t+9t^2-4t^3,\ \ \ 
x''(t)=\frac{2i}{t^3}-6+18t-12t^2,$$
$$y'(t)=-\frac{1}{t^2},\ \ \ y''(t)=\frac{2}{t^3},$$
and for $t_{0}=1$, we get:
$$x(1)=i,\ \ \  y(1)=1,\ \ \ x'(1)=-i, \ \ \ x''(1)=2i,\ \ \ y'(1)=-1, \ \ \ y''(1)=2,$$
and 
$$x'(1)y''(1)-y'(1)x''(1)=(-i)2-(-1)(2i)=0,$$
$$(x'(1))^2+(y'(1))^2=(-i)^2+(-1)^2=0.$$

By Theorem \ref{t:reg}(c), every point $(u_1,u_2)$ which satisfies the equation
$$x'(1)(x(1)-u_1)+y'(1)(y(1)-u_2)=0$$ 
is in $\Delta^{\reg}$.  In our case we have:
 $$x'(1)(x(1)-u_1)+y'(1)(y(1)-u_2)=(-i)(i-u_1)+(-1)(1-u_2)=iu_1+u_2,$$
thus our claim is proved.   In fact we have shown, that $\Delta_{\xi}^{atyp} = \Delta_{p}^{\iflex}$. Note that these discriminant lines are related to two different points, namely $\xi$ at infinity and the flex point $p \in X$. Due to the special construction of this both lines coincide in the $u_1u_2$ space.   
\end{example}

\subsection{Isotropic coordinates}\label{ss:isotropic}
The examples with atypical discriminant do not occur in the real setting. Indeed,  the isotropic points 
at infinity $Q^{\ity}$ are not real, and the atypical discriminant is not real either (since consist of lines parallel to the isotropic lines).
We obtain real coefficients when we use ``isotropic coordinates'', as follows: $z:=x+iy$, $w:= x-iy$.
 The data points get coordinates: $v_{1} := u_{1}+i u_{2}$, $v_{2} := u_{1}-i u_{2}$, and the Euclidean distance function takes the following hyperbolic shape: 
 \[ D_v(z,w) =  (z-v_1)(w-v_2). \]
In isotropic coordinates, $Q^{\ity}$ reads $\{zw=0\}$, thus two points: $[0;1]$ and $[1;0]$.
In order to study what happens with the Morse points in the neighborhood of these points at infinity $[0;1]$ and $[1;0]$, we need to change the variables in the formulas of \S\ref{s:struct}. So we recall and adapt as follows:

Let $\gamma:D\to \Gamma$ be a local holomorphic  parametrization of $\Gamma$ at $\xi\in Q^{\ity}$, where 
$D$ is some small enough disk in $\bC$ centred at $0$, and $\gamma(0)=\xi$. For $t\in D$, we write $z_{1}(t):=z_{1}(\gamma(t))$ and $z_{2}(t):=z_{2}(\gamma(t))$, where $z_{1}(t)$ and $z_{2}(t)$ are 
meromorphic on $D$. Then there exists a unique positive integer $k$ such that:
$$z_{1}(t)=\frac{P(t)}{t^k}, \ z_{2}(t)=\frac{Q(t)}{t^k},$$
and $P(t)$ and $Q(t)$ are holomorphic on $D$, where $P(0)$ and $Q(0)$ are not both equal to zero. Note that under these notations we have  $\xi =[P(0);Q(0)]\in H^{\ity}$.

For $t\in D\m\{0\}$ and $v=(v_1,v_2)\in\bC^2$,  we then have the equivalence $((z_{1}(t),z_{2}(t)),v)\in \cE_X \Longleftrightarrow h(t,v) =0$, where:
\begin{equation}\label{eq:test}
 h(t,v)=(tP'(t)-kP(t))(Q(t)-v_2t^k)+(tQ'(t)-kQ(t))(P(t)-v_1t^k),
\end{equation}
and note that $h:D\times\bC^2\to \bC$ is a holomorphic function. 

%%%%%%%%

\begin{example}\label{e:isotropic}
$X = \{z_1^2 z_2 - z_1 = 1 \}$ in isotropic coordinates.  Then 
$X^{\infty}\cap Q^{\ity} = Q^{\ity}$  consist of the two isotropic points, $[0;1]$ and $[1;0]$. On computes  that $\EDdeg(X) =3$.

At the point $[0;1]\in Q^{\ity}$, the curve $\bar X$ has a single local branch, which we denote by $X_{[0;1]}$. We use the parametrization:
 $z_1 = t = \frac{t^3}{t^2}$,  $z_2 = \frac{1+t}{t^2}$. \\
 Thus $k=2$ and  $P = t^3$,   $Q= 1+t$, for $t \to 0$

The condition \eqref{eq:test} becomes: $h(t,v) = 2v_1 t^2 + (v_1-1) t^3 - v_2 t^5$.

In the notations of \S\ref{s:struct}, we get:  $h_0=h_1=0$, and therefore, by Theorem \ref{t:atyp}, we have that  $\Delta^{\atyp}(X_{[0;1]}) = \{v_1 = 0\}$. By Theorem \ref{t:morseinfty}, the Morse number is $m_{X_{[0;1]}}^{\gen}= 3-2=1$ at every point of this line.

\begin{figure}[hbt!]
 \includegraphics[width=65mm]{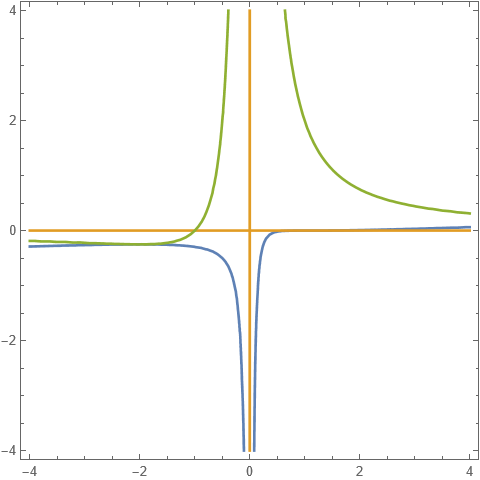}
 \caption{In blue $\Delta^{\reg}$; in brown $\Delta^{\atyp}$; in green a real picture of $X$.}
\end{figure}

At the other isotropic point $[1;0]$, we also have a single branch of $X $, which we denote by $X_{[1;0]}$.
Using the  parametrization $z_1 =  \frac{1}{t}$, $z_2 = \frac{t^3 + t^2}{t}$, we get $k=1$, and  $P =1$, $Q= t^2 + t^3$.
This yields:  $h(t,v) = v_2 t + (1-v_1) t^3 - 2 v_1 t^4$.

Since $h_{0}=0$, by Theorem \ref{t:atyp}, we have that $\Delta^{\atyp}(X_{[1;0]}) = \{v_2 = 0\}$. By Theorem \ref{t:morseinfty}, the Morse number is $m_{X_{[1;0]}}^{\gen} =3-1=2$ at all points of this line except of the point of intersection $(1,0) =\{v_2 = 0 = 1-v_1\}$, where the Morse number is $m_{X_{[0;1]}}((1,0)) =4-1=3$.  At this exceptional point, all the 3 Morse points abut to infinity at $[1;0]$.

Here are some more conclusions:
\begin{itemize}
\item[$\bullet$] $\Delta^{\atyp} = \{ v_1 v_2 = 0\}$: two lines trough the isotropic points,
\item[$\bullet$] $\Delta^{\reg}  = \{  -  v_1^3 +27 v_1^2v_2 + 3v_1^2 -3 v_1 +1 = 0 \}$, as computed with Mathematica \cite{Wo}.
\item[$\bullet$] $\Delta^{\sing}=\emptyset$.
\end{itemize}
Notice that $\Delta^{\atyp}\cap \Delta^{\reg} \ni (1,0)$.  
We have seen above that when moving the data point from outside the discriminant $\Delta_{\tED}$ to the point $(1,0)$,  the 3 Morse points go to infinity at $[1;0]$.
In case we move the data point inside  $\Delta^{\reg}$, then both the Morse point and the non-Morse singular point
go to infinity. Moving the data point along $\{v_2=0\}$, the single Morse point goes to infinity at $[1;0]$. 
\end{example}

\begin{example}{\bf Singular points with isotropic tangents}.\label{e:isotrop}
Recall that $[1;0]$ and $[0;1]$ are the isotropic points.  We can assume that the branch $\Gamma$ at the point $p= (0,0)$ is given by:
$$ z_1 = t^{\alpha}  \; ; \; z_2 = c t^{\beta}  + \hot  \; \;  c \ne 0 \; \;  \beta > \alpha \ge 1  $$
The distance functie $D_v$ gives the following normality condition:
$$ -  \alpha v_2 t ^{\alpha -1} - c \beta v_1 t^{\beta-1} + c (\alpha+\beta) t^{\alpha +\beta -1} + \hot = 0  $$
The condition $v_2 = 0$ defines $\Delta^{\sing}$ and is the normal, but also the tangent line at $p$ !. On $v_2=0$ we get (after dividing bij $t^{\alpha-1})$: 
 $$  - c \beta v_1 t^{\beta - \alpha} + c (\alpha+\beta) t^{\beta} + \hot = 0  $$
If moreover $v_1 \ne 0$ we get $m_p(\Gamma)(v) = \beta - \alpha$; if $v_1=0$  we get $m_p(\Gamma)(\tilde{v})  =\beta$.

The focal discriminant can be computed easily; it cuts $\Gamma$ in $p$. One finds (up to constants and higher order terms):  $ v_1 = t^{\alpha} \; ; \; v_2 =t^{\beta}$;
which gives $\Delta^{\focal}$ the the same cusp type as the original branch $\Gamma$.  This is not a surprise since tangent lines and normal lines  are the same and the envelope of normals and tangents show a similar behavior.

\end{example}

%%%%%%%%%%%%%%%%%%%%%%%%%


\begin{thebibliography}{1000}


 
 \bibitem[Br]{Bri} E.~Brieskorn,
 {\it Die Monodromie der isolierten Singularit\"{a}ten von  Hyperfl\"{a}chen},
 Manuscripta Math. (1970), no. 2, 103-161.
 
 \bibitem[CT]{CT}
 F. Catanese, C. Trifogli, 
 \emph{Focal loci of algebraic varieties. I.}
 Comm. Algebra 28 (2000), no. 12, 6017-6057.
 
 \bibitem[DGS]{DGS} S. Di Rocco,  L. Gustafsson, L. Sodomaco.  \emph{Conditional Euclidean distance optimization via relative tangency.}  preprint arXiv:2310.16766 (2023).
 
 \bibitem[DHOST]{DHOST}
J.~Draisma, E.~Horobe\c{t}, G.~Ottaviani, B.~Sturmfels and R.~R. Thomas.
 \emph{The Euclidean distance degree of an algebraic variety.}
 Found. Comput. Math. 16  (2016), no 1, 99-149.

 \bibitem[Ho1]{Hor2017}
E.~Horobe\c{t}, \emph{The data singular and the data isotropic loci for affine cones}.
Communications in Algebra, 45 (2017), no. 3, 1177-1186.
 
 \bibitem[Ho2]{Ho} E.~Horobe\c{t},   \emph{The critical curvature degree of an algebraic variety.} Journal of Symbolic Computation 121 (2024) 102259.
 
 \bibitem[HS]{HS2014}
J.~Huh and B.~Sturmfels,  \emph{Likelihood geometry}. In: Combinatorial algebraic geometry,  
  Lecture Notes in Math. 2108, pag. 63-117. Springer, Cham, 2014.


\bibitem[MRW1]{MRW2018}
L.~G. Maxim, J.~I. Rodriguez and B.~Wang.
\emph{Euclidean distance degree of the multiview variety.}
SIAM J. Appl. Algebra Geom. 4 (2020), no. 1, 28-48.  

\bibitem[MRW2]{MRW5}
L.G. Maxim, J.I. Rodriguez and B.~Wang.
  \emph{A Morse theoretic approach to non-isolated singularities and applications to optimization.}
 J. Pure Appl. Algebra  226 (2022), no. 3, Paper No. 106865, 23 pp.

\bibitem[MT1]{MT1} L. Maxim, M.~Tib\u{a}r,  {\it Euclidean distance degree and limit points in a Morsification},   Adv. in Appl. Math. 152 (2024), Paper No. 102597, 20 pp.

\bibitem[MT2]{MT2} L. Maxim, M.~Tib\u{a}r,  {\it Morse numbers of function germs with isolated singularities},   Q. J. Math. 74 (2023), no. 4, 1535--1544.

\bibitem[MT3]{MT3} L. Maxim, M.~Tib\u{a}r,  \emph{Morse numbers of complex polynomials}.   J. Topology 17 (2024), no. 4,  Paper e12362.   

\bibitem[Mi]{Mi} J.W. Milnor, Morse theory. Ann. of Math. Stud., No. 51
Princeton University Press, Princeton, NJ, 1963, vi+153 pp.

\bibitem[NRS]{NRS2010}
J.~Nie, K.~Ranestad, and B.~Sturmfels.
\emph{The algebraic degree of semidefinite programming}.  Math. Program. 2 Ser. A,  122 (2010), 379-405.

\bibitem[PST]{PST2017}
J.~Ponce, B.~Sturmfels, and M.~Trager,  Congruences and concurrent lines in multi-view geometry.
Adv. in Appl. Math 88 (2017),  62-91.

\bibitem[PRS]{PRS} 
R. Piene, C. Riener, B. Shapiro,  \emph{Return of the plane evolute.} https://arxiv.org/abs/2110.11691

\bibitem[STV]{STV} J. Seade, M. Tib\u ar, A. Verjovsky,
{\it Global Euler obstruction and polar invariants}, 
Math. Ann. 333 (2005), no.2, 393-403. 

\bibitem[Ti]{Ti3}
M. Tib\u ar, \emph{Local linear Morsifications.} Rev. Roum. Math. Pures Appl. 69 (2024), no. 2, 295-303.

\bibitem[Tr]{Tr} C. Trifogli,
\emph{Focal loci of algebraic hypersurfaces: a general theory.} Geom. Dedicata   70 (1998), no. 1, 1-26.

\bibitem[Wo]{Wo} Mathematica 14.1. Wolfram Mathematica free software,  https://www.wolfram.com/mathematica/

\end{thebibliography}
\end{document}